\pgfplotsset{compat=1.15}
\newtheorem{theorem}{Theorem}[section]
\newtheorem{lemma}[theorem]{Lemma}
\newtheorem{remark}[theorem]{Remark}
\newtheorem{corollary}[theorem]{Corollary}
\newtheorem{proposition}[theorem]{Proposition}
\newtheorem{lem-def}[theorem]{Lemma-Definition}
\renewenvironment{proof}{{\bfseries Proof.}}{\qed}
\newcommand{\R}{\mathbb R}
\newcommand{\N}{\mathbb N}
\newcommand{\Q}{\mathbb Q}
\def\ars#1{\renewcommand\arraystretch{#1}}
\def\diso{\lower.4ex\hbox{$\downarrow$}\raise.4ex\hbox{\mbox{\scriptsize
$\wr$}}}
\def\ism{\lower.3ex\hbox{\ars{.08}$\begin{array}{c}\,\to\\\mbox{\tiny $\sim\,$}\end{array}$}}
\def\iso{\lower.3ex\hbox{\ars{.08}$\begin{array}{c}\lra\\\mbox{\tiny $\sim\,$}\end{array}$}}
\def\lg{l\raise.6ex\hbox to.2em{\hss.\hss}l}
\def\lra{\,\longrightarrow\,}
\def\orb{\hbox to  .3em{$\backslash$}\backslash}
\DeclareMathOperator{\inv}{in}
\newcounter{cs}
\newcommand{\casos}{\begin{itemize}}
\newcommand{\fcasos}{\end{itemize}\setcounter{cs}{1}}
\newfont{\tit}{cmr12 scaled \magstep3}
\author{Josnei Novacoski}
\address{Departamento de Matem\'{a}tica,         Universidade Federal de S\~ao Carlos, Rod. Washington Luís, 235, 13565--905, S\~ao Carlos -SP, Brazil}
\email{josnei@ufscar.br}
\thanks{During the realization of this project the author was supported by two grants from Funda\c{c}\~ao de Amparo \`a Pesquisa do Estado de S\~ao Paulo (process numbers 2017/17835-9 and 2021/11246-7).}
\keywords{Key polynomials, graded algebras, the defect}
\subjclass[2010]{Primary 13A18}
\title[Characterization of defect]{A characterization for the defect of rank one valued field extensions}
\begin{document}
\begin{abstract}
In this paper we present a characterization for the defect of a simple algebraic extension of rank one valued fields using the key polynomials that define the valuation. As a particular example, this gives the classification of defect extensions of degree $p$ as dependent or independent presented by Kuhlmann.
\end{abstract}
\maketitle
\section{Introduction}
Let $(L/K,v)$ be a finite valued field extension. Suppose that $L=K(\eta)$ for some $\eta\in L$ and let $g$ be the minimal polynomial of $\eta$ over $K$. We will consider the valuation $\nu$ on $K[x]$ with support $gK[x]$ defined by $v$. Namely, for any $f\in K[x]$ we consider its $g$-expansion:
\[
f=f_0+f_1g+\ldots+f_rg^r.
\]
Then $\nu(f):=v(f_0(\eta))$.

Fix an extension $\overline \nu$ of $\nu$ to $\overline K[x]$, where $\overline K$ is a fixed algebraic closure of $K$. For each $f\in K[x]$ we define
\[
\epsilon(f):=\max\{\overline \nu(x-a)\mid a\mbox{ is a root of }f\}.
\]
A monic polynomial $Q\in K[x]$ is called \textbf{a key polynomial for $\nu$} if
\[
\deg(f)<\deg(Q)\Longrightarrow \epsilon(f)<\epsilon(Q)\mbox{  for all }f\in K[x].
\]

Let $vL$ be the value group of $v$ and denote by $\Gamma$ the divisible closure of $vL$. For $n\in \N$ we denote by $\Psi_n$ the set of all the key polynomials for $\nu$ of degree $n$. We will say that $\Psi_n$ does not have a maximum or that $\Psi_n$ is bounded in $\Gamma$ if the same property is satisfied for $\nu(\Psi_n)$. A \textbf{key polynomial for $\Psi_n$} is a key polynomial for $\nu$ of smallest degree larger than $n$. We denote by ${\rm KP}(\Psi_n)$ the set of all the key polynomials for $\Psi_n$. If $\Psi_n$ does not have a maximum, then any key polynomial for $\Psi_n$ will be called a \textbf{limit key polynomial for $\Psi_n$}. In this case, we say that $\Psi_n$ is a \textbf{plateau} for $\nu$.

For any key polynomial $Q$ for $\nu$ and $f\in K[x]$ we will denote by
\[
f=a_{Q0}(f)+a_{Q1}(f)Q+\ldots+a_{Qr}(f)Q^r
\]
the $Q$-expansion of $f$. We set
\[
L_Q(f)=\{i\in\N_0\mid a_{Qi}(f)\neq 0\} 
\]
i.e., the set of indexes of the non-zero monomials in the $Q$-expansion of $f$. We define the \textbf{truncation of $\nu$ at $Q$} as
\[
\nu_Q(f)=\min_{i\in L_Q(f)}\{\nu\left(a_{Qi}(f)Q^i\right)\}.
\]
This mapping is a valuation (\cite[Proposition 2.6]{SopivNova}).

For $n\in \N$, $n<\deg(g)$, such that $\Psi_n\neq \emptyset$ the fact that $\nu(g)=\infty$ implies that $\Psi_n$ admits a key polynomial $F$. In particular, if $Q\in \Psi_n$, then $\nu_Q\rightarrow\nu_{F}$ is an augmentation (\cite[Theorems 6.1 and 6.2]{Nov1}). Moreover, this augmentation is a limit augmentation if and only if $\Psi_n$ does not have a maximum. Hence, we can define the \textbf{defect $d(\Psi_n)$} of $\Psi_n$ by $d(\nu_Q\rightarrow \nu_{F}$) (see more details in Section \ref{definitiondeffect}).   

We will denote by $p$ the characteristic exponent of $Kv$. The main goal of this paper is to prove the following result. 
\begin{theorem}\label{mainthm}
Assume that $d(L/K,v)=p^d$ and that ${\rm rk}(v)=1$. Then there exist uniquely determined $d_1,\ldots,d_{r-1}\in\N$, $d_r\in\N_0$, and for every $i$, $1\leq i< r$, a uniquely determined subset $I_i\subseteq \{0,\ldots,d_i-1\}$ such that the following hold.
\begin{description}
\item[(i)] $d=d_1+\ldots+d_r$.
\item[(ii)] There exist $n_1,\ldots,n_r\in \N$ with $n_1<n_2<\ldots<n_r$ such that $\Psi_{n_i}$, $1\leq i\leq r$, are all the plateaus for $\nu$.
\item[(iii)] For every $i$, $1\leq i\leq r$, we have $d(\Psi_{n_i})=p^{d_i}$.
\end{description}
For each $i$, $1\leq i< r$, and every limit key polynomial $F$ for $\Psi_{n_i}$, there exists $Q_i\in \Psi_{n_i}$ such that for every $Q\in \Psi_{n_i}$ with $\nu(Q)\geq \nu(Q_i)$ we have:
\begin{description}
\item[(iv)]
\[
p^{I_i}:=\{p^j\mid j\in I_i\}\subseteq L_Q(F)\mbox{; and}
\] 
\item[(v)]
\begin{equation}\label{equalimiktp}
a_{Q0}(F)+\sum_{j\in I_i}a_{Qp^j}(F)Q^{p^j}+Q^{p^{n_i}}
\end{equation}
is a limit key polynomial for $\Psi_{n_i}$.
\end{description}
Moreover, if $\Psi_{n_r}$ is bounded in $\Gamma$, then we can also find a uniquely determined $I_{r}$, and for $F\in\Psi_{n_r}$ a polynomial $Q_r\in\Psi_{n_r}$, satisfying \textbf{(iv)} and \textbf{(v)} (for $i=r$). 
\end{theorem}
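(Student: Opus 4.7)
The plan is to realize $\nu$ as the endpoint of a Mac Lane--Vaqui\'e (MLV) chain of augmentations starting from the Gauss valuation. In such a chain, limit augmentations correspond bijectively to the plateaus of $\nu$. Since $\deg(g)$ is finite and the degrees of key polynomials strictly increase along the chain, only finitely many plateaus $\Psi_{n_1},\ldots,\Psi_{n_r}$ occur with $n_1<\ldots<n_r$, which proves \textbf{(ii)}. Statement \textbf{(iii)} follows by identifying $d(\Psi_{n_i})$ with the defect of the $i$-th limit augmentation. Multiplicativity of defect along the MLV chain then gives $p^d = d(L/K,v)=\prod_i d(\Psi_{n_i})$, and the rank $1$ hypothesis forces each $d(\Psi_{n_i})$ to be a power of $p$, say $p^{d_i}$. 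This yields $d=\sum_i d_i$, proving \textbf{(i)}; that $d_r$ may be zero reflects that the final plateau may correspond to a limit augmentation with trivial defect.

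For \textbf{(iv)} and \textbf{(v)} I fix a plateau $\Psi_{n_i}$ and a limit key polynomial $F$ for $\Psi_{n_i}$. The strategy is to analyze the $Q$-Newton polygon of $F$ for $Q \in \Psi_{n_i}$ as $\nu(Q)$ approaches the supremum of $\nu(\Psi_{n_i})$ in $\Gamma$. In this limiting regime the polygon stabilizes: the set of indices $j \in L_Q(F)$ at which $\nu(a_{Qj}(F)Q^j)$ achieves $\nu_Q(F)$ becomes eventually constant. I will define $I_i$ so that $\{0\} \cup p^{I_i}$, together with the top vertex, is precisely this stabilized dominant set. The restriction to $p$-power indices is the heart of the rank $1$ characterization: any index in $L_Q(F)$ contributing a non-$p$-divisible slope to the polygon would force an intermediate value-group jump incompatible with a defect that is a pure power of $p$, so such indices must drop out of the dominant set in the limit.

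The main obstacle is \textbf{(v)}: showing that the specific truncation displayed in (\ref{equalimiktp}) is itself a limit key polynomial for $\Psi_{n_i}$. The plan has two parts. First, verify that for $Q$ with $\nu(Q) \geq \nu(Q_i)$, the discarded terms in the $Q$-expansion of $F$ have $\nu_Q$-value strictly greater than the retained terms, so they do not affect the initial form $\op{in}_{\nu_Q}(F)$. Second, show that sharing this initial form with $F$ across the entire family $\{\nu_Q : Q \in \Psi_{n_i},\ \nu(Q)\geq\nu(Q_i)\}$ is enough to force the truncation to satisfy the defining property of a key polynomial of the same degree as $F$, which by minimality makes it a limit key polynomial for $\Psi_{n_i}$. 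Uniqueness of $d_1,\ldots,d_r$, of $n_1,\ldots,n_r$, and of each $I_i$ is then inherited from the uniqueness of the MLV decomposition together with the eventual stabilization of the Newton polygon. The supplementary claim about the case when $\Psi_{n_r}$ is bounded in $\Gamma$ follows by applying the same polygon analysis to the terminal plateau, the boundedness hypothesis being exactly what is needed to run the stabilization argument there as well.
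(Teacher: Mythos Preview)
Your outline for \textbf{(i)}--\textbf{(iii)} is essentially the paper's argument: build a MLV chain, identify plateaus with limit augmentations, and use multiplicativity of defect (the paper's Theorem~\ref{importate}). One small correction: that each $d(\Psi_{n_i})$ is a $p$-power follows from $d(\Psi_{n_i})\mid p^d$, not from the rank hypothesis; rank~$1$ is used instead to guarantee that $\Psi_{n_i}$ is bounded for $i<r$, which via \eqref{equationsdefectaug} gives $d_i>0$.

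For \textbf{(iv)}--\textbf{(v)} there is a genuine gap in how you characterize $I_i$. You propose to take the ``stabilized dominant set'': the indices $j$ with $\nu(a_{Qj}(F)Q^j)=\nu_Q(F)$. But for $Q\in\Psi_{n_i}$ with large value the Newton polygon $\Delta_Q(F)$ is a single segment with vertices only at $0$ and $D=p^{d_i}$ (see the paper's Section~\ref{geometric}); no intermediate index ever achieves the minimum $\nu_Q(F)$. With your definition one would always get $I_i=\emptyset$, and then the truncation $a_{Q0}(F)+Q^{p^{d_i}}$ is \emph{not} in general a limit key polynomial: the paper's Proposition~\ref{existenceunuequenusI} shows that $F_{S,\rho}$ is a limit key polynomial if and only if $S$ equals the full set $B_{n_i}$, and $B_{n_i}\neq\emptyset$ does occur (e.g.\ independent Artin--Schreier extensions, where $B_1=\{1\}$).

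The correct object is the set $B_{n_i}$ of indices $j\in\{1,\dots,D-1\}$ for which $\nu(a_{Qj}(F)Q^j)$ remains \emph{below the limit} $\overline B:=\sup_{Q\in\Psi_{n_i}}\nu_Q(F)$ for all sufficiently large $Q$; equivalently, the indices whose limiting value $\beta_j+jB$ lies on the line $\pi$ through $(D,0)$ and $(0,\overline B)$, which sits strictly above $\Delta_Q(F)$ for every fixed $Q$. The paper proves this set is independent of the choice of $F$, that it consists only of $p$-powers (citing \cite[Theorem 1.1]{NOVSpiv}, not the value-group heuristic you sketch), and that $p^{I_i}:=B_{n_i}$ is the unique set making \eqref{equalimiktp} a limit key polynomial. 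Uniqueness is not just stabilization: one applies condition \textbf{(iv)} to the truncated polynomial $F_i$ itself, forcing any admissible $I$ to satisfy $p^I\subseteq L_Q(F_i)$ and hence $I\subseteq I_i$, and then the ``only if'' half of Proposition~\ref{existenceunuequenusI} rules out $I\subsetneq I_i$.
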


Theorem \ref{mainthm} can be seen as a generalization of the classification of defect extensions of degree $p$ presented by Kuhlmann in \cite{Kuhl} and extended by Kuhlmann and Rzepka in \cite{KR}. For a subset $S\subseteq \Gamma\cup\{\infty\}$, we define $\overline S$ as the cut  on $\Gamma$ having the lower cut set given by
\[
\{\gamma\in \Gamma\mid\ \exists s\in S\mbox{ with }\gamma\leq s\}.
\]
Also, we define $S^-$ as the cut on $\Gamma$ having the lower cut set given by
\[
\{\gamma\in \Gamma\mid \gamma<s\mbox{ for every }s\in S\}.
\]
Suppose that $vL=vK$. The \textbf{distance of $\eta$ to $K$} is the cut
\[
{\rm dist}(\eta,K)=\overline{\{v(\eta-b)\mid b\in K\}}.
\]

In \cite{Kuhl} and \cite{KR}, the authors consider independent and dependent defect extensions in two cases. We will say that we are in the \textbf{Artin-Schreier case} if
\begin{equation}                           \label{sitAS}
\left\{\begin{array}{l}
L=K(\eta)\mbox{ is an Artin-Schreier extension of }K,\\
\mbox{the minimal polynomial of }\eta\mbox{ over }K\mbox{ is }g=x^p-x-a; and\\
d(L/K,v)=p
\end{array}\right. .
\end{equation}
We will say that we are in the \textbf{Kummer case} if
\begin{equation}                           \label{sitkummer}
\left\{\begin{array}{l}
K\mbox{ contains a }p\mbox{-th root of unity}\\
L=K(\eta)\mbox{ is a Kummer extension of }K,\\
\mbox{the minimal polynomial of }\eta\mbox{ over }K\mbox{ is }g=x^p-a; and\\
v(a)=0\mbox{ and }d(L/K,v)=p
\end{array}\right. .
\end{equation}

In the situation \eqref{sitAS} we say that $(L/K,v)$ is \textbf{independent} if
\[
{\rm dist}(\eta, K)=H^-\mbox{ for some convex subgroup }H\mbox{ of }\Gamma.
\]
Otherwise, it is called \textbf{dependent}. If \eqref{sitkummer} is satisfied, then we say that $(L/K,v)$ is \textbf{independent} if
\[
{\rm dist}(\eta, K)=\frac{v(p)}{p-1}+H^-\mbox{ for some convex subgroup }H\mbox{ of }\Gamma.
\]
Otherwise, it is called \textbf{dependent}.

\begin{proposition}\label{independengrankeone}
Assume that either \eqref{sitAS} or \eqref{sitkummer} is satisfied. Suppose that ${\rm rk}(v)=1$ and consider the valuation $\nu$ on $K[x]$ with support $gK[x]$ induced by $v$. Then, in the notation of Theorem \ref{mainthm}, we have $r=1$ and $d_1=n_1=1$. Moreover, $\Psi_1$ is bounded in $\Gamma$ and 
\begin{equation}\label{equatiocharacindeprn}
I_1=\emptyset\mbox{ if and only if } (L/K,v) \mbox{ is dependent.}
\end{equation}
\end{proposition}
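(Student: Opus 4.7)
My plan is to first establish the coarse numerical data $r=n_1=d_1=1$ together with the boundedness of $\Psi_1$, and then to characterize $I_1$ by expanding the two natural limit key polynomials in the $Q$-adic form.

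Because $\deg g = p$ is prime, the standard divisibility in the \mlv chain of key polynomial degrees forces $\Psi_n = \emptyset$ for every $1 < n < p$; since $d(L/K,v) = p$ is nontrivial, Theorem~\ref{mainthm} requires at least one plateau and $\Psi_1$ is the only candidate, giving $r=1$ and $n_1=1$, while $p^d = p$ combined with part~(i) yields $d_1 = d = 1$. For the boundedness of $\Psi_1$ in $\Gamma\subseteq\R$, I argue by contradiction: if $\sup\nu(\Psi_1) = +\infty$, then $v(\eta - b_n)\to\infty$ for some sequence $(b_n)\subseteq K$, so $\eta$ lies in the completion $\widehat K$. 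In both \eqref{sitAS} (remaining roots $\eta + i$ with $i\in\F_p^*\subseteq K$) and \eqref{sitkummer} (remaining roots $\zeta^i\eta$ with $\zeta\in\mu_p\subseteq K$), the polynomial $g$ then splits completely over $\widehat K$, so $v$ admits $p$ distinct immediate extensions to $L$, each of defect $1$ by Ostrowski's fundamental inequality; this contradicts $d(L/K,v)=p$.

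For the characterization, note that $g$ itself is a key polynomial of degree $p$ (since $\nu(g)=\infty$ forces $\epsilon(g)=\infty$) and hence a limit key polynomial for the plateau $\Psi_1$. Setting $F=g$ and $Q = x-b\in\Psi_1$, the constraint $I_1\subseteq\{0,\ldots,d_1-1\}=\{0\}$ leaves exactly the two candidates
\[
\tilde g_{\{0\}} = a_{Q0}(g) + a_{Q1}(g)\,Q + Q^p, \qquad \tilde g_\emptyset = a_{Q0}(g) + Q^p.
\]
In the Artin-Schreier case (so $\op{char}(K)=p$) one computes $g = Q^p - Q + (b^p - b - a)$, whence $\tilde g_{\{0\}} = g$ is always a key polynomial, while $\tilde g_\emptyset(x) = x^p - (a+b)$ has unique root $\beta = (a+b)^{1/p}$ satisfying $(\eta-\beta)^p = \eta - b$; hence $\epsilon(\tilde g_\emptyset) = \nu(Q)/p$, and $\tilde g_\emptyset$ is a key polynomial precisely when $\nu(Q)\geq p\gamma$, where $\gamma := \sup\nu(\Psi_1)$. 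A Kuhlmann-style iteration (replacing $b$ by $b^p - a$, yielding $v(\eta-(b^p-a)) = p\,v(\eta-b)$) rules out $\gamma>0$ under the defect hypothesis, so $\gamma\leq 0$. Independence is the case $\gamma=0$, in which case $\nu(Q)<0=p\gamma$ prevents $\tilde g_\emptyset$ from being a KP for any $Q$, forcing $I_1=\{0\}$; dependence is $\gamma<0$, and then $\tilde g_\emptyset$ itself is a second limit KP whose $Q$-expansion has $L_Q(\tilde g_\emptyset)=\{0,p\}$, violating (iv) for $I_1=\{0\}$, so the uniform-over-$F$ uniqueness in Theorem~\ref{mainthm} forces $I_1=\emptyset$. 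The Kummer case runs in parallel with the threshold $v(p)/(p-1)$ in place of $0$, using $v(1-\zeta) = v(p)/(p-1)$ for $\zeta\in\mu_p\setminus\{1\}$ and the analogous iteration from \cite{KR}; in the subcase $\op{char}(K)=p$ the $Q$-expansion of $g$ automatically has $a_{Q1}(g)=0$, so (iv) eliminates $I_1=\{0\}$ immediately, consistent with the fact that Kummer extensions in characteristic $p$ are always dependent.

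The hard part is the Kummer case in characteristic zero: all middle binomial coefficients $\binom{p}{i}$ contribute to the $Q$-expansion of $g$, so computing $\epsilon(\tilde g_\emptyset)$ requires careful tracking of the $p$-adic cancellation in $(\eta-b)^p - (\eta^p - b^p)$, and pinning the iteration bound down at exactly $v(p)/(p-1)$ is noticeably more delicate than its Artin-Schreier counterpart.
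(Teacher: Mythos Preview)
Your argument is essentially correct but takes a genuinely different route from the paper's proof.

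The paper determines $I_1$ directly from the \emph{definition} of $B_1$ developed in Section~3 (equivalently, the Newton-polygon criterion of \S\ref{geometric}): one has $0\in I_1$ if and only if $\nu\big(a_{Q1}(g)\,Q\big)\le \overline B$ for all large $Q$, i.e.\ $\beta_1+\gamma=p\gamma$ where $\gamma=\sup_{b}v(\eta-b)$ and $\overline B=p\gamma$. In the Artin--Schreier case $a_{Q1}(g)=-1$ gives $\beta_1=0$, so this reduces to $\gamma=0$; in the Kummer case $a_{Q1}(g)=pb^{p-1}$ gives $\beta_1=v(p)$, and together with the bound $\gamma\le v(p)/(p-1)$ from \cite{KR} the condition becomes $\gamma=v(p)/(p-1)$. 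Either way the computation is a one-line value comparison.

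You instead work ``axiomatically'' from the \emph{statement} of Theorem~\ref{mainthm}: since $I_1\subseteq\{0\}$, you test the two candidates by forming $\tilde g_\emptyset$ and $\tilde g_{\{0\}}$, computing $\epsilon(\tilde g_\emptyset)$ via the explicit root, and checking when each is a limit key polynomial. This is valid and has the virtue of using nothing beyond the theorem's properties~(iv)--(v), but it is considerably more laborious---as you yourself note, in the Kummer case the middle binomial coefficients make $\epsilon(\tilde g_\emptyset)$ delicate to compute, whereas the paper's value comparison $v(p)+\gamma$ versus $p\gamma$ bypasses this entirely. Your boundedness argument via splitting in $\widehat K$ is also correct in the separable cases (the $p$ embeddings really do give distinct valuations, since the roots are at bounded mutual distance), and is in fact more explicit than the paper's, which simply asserts this step.
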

Since the only possibilities for $I_1$ are $\emptyset$ or $\{0\}$, the condition \eqref{equatiocharacindeprn} is equivalent to  
\[
I_1=\{0\}\mbox{ if and only if } (L/K,v) \mbox{ is independent.}
\]

The sets $I_i$, appearing in Theorem \ref{mainthm}, have a very explicit description. This description can be generalized even if ${\rm rk}(v)\neq 1$. Namely, for a plateau $\Psi_n$ and a limit key polynomial $F$ for $\Psi_n$ we consider the cut
\[
\delta_F=\overline{\{\nu_Q(F)\}_{Q\in \Psi_n}}
\]
on $\Gamma$. There exists $D\in\N$ such that for every $Q\in \Psi_n$ the $Q$-expansion of $F$ is of the form
\[
F=a_{Q0}(F)+a_{Q1}(F)Q+\ldots+a_{QD}(F)Q^D.
\]
We define
\[
B(F)=\{b\in \{1,\ldots,D-1\}\mid \nu\left(a_{Qb}(F)Q^b\right)\in \delta_F^L\mbox{ for every }Q\mbox{ with large enough value}\}.
\]
For a plateau $\Psi_{n_i}$ and a limit key polynomial $F$ for $\Psi_{n_i}$ as in Theorem \ref{mainthm}, the set $I_i$ will be defined as the numbers $j$ for which $p^j\in B(F)$. 

The next result is a generalization of Proposition \ref{independengrankeone} for rank greater than one.
\begin{proposition}\label{propdockuhlma}
Assume that either \eqref{sitAS} or \eqref{sitkummer} is satisfied. Consider the valuation $\nu$ on $K[x]$ with support $gK[x]$ induced by $v$. Then $\delta_g< \infty^-$ and
\begin{equation}\label{indepenchara}
B(g)=\emptyset\Longleftrightarrow (L/K,v)\mbox{ is dependent.}
\end{equation}
\end{proposition}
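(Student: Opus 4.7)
The plan is to compute, for each linear key polynomial $Q=x-\beta\in\Psi_1$, the $Q$-expansion of $g$ and the truncation $\nu_Q(g)$, and then compare the values $\nu(a_{Qb}(g)Q^b)$ for $b\in\{1,\ldots,p-1\}$ with the cut $\delta_g$. In the AS case the identity $g(x)=(x-\eta)^p-(x-\eta)$ (valid since $\chr K=p$) specializes to
\[
g=g(\beta)-Q+Q^p,
\]
so that $a_{Qb}(g)=0$ for $2\leq b\leq p-1$. In the Kummer case a binomial expansion gives
\[
g=g(\beta)+\sum_{i=1}^{p-1}\binom{p}{i}\beta^{p-i}Q^i+Q^p;
\]
since $v(\beta)=v(\eta)=0$ and $v\!\left(\binom{p}{i}\right)=v(p)$ for $1\leq i\leq p-1$, one reads off $\nu(a_{Qi}(g)Q^i)=v(p)+i\,v(\eta-\beta)$ for $1\leq i\leq p-1$.

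The next step is a Krasner estimate. The conjugates of $\eta$ different from $\eta$ are $\eta+k$ with $k\in\F_p^\times$ in the AS case, giving $v(\eta-\eta_k)=0$, and $\eta\zeta^k$ with $1\leq k<p$ in the Kummer case, giving $v(\eta-\eta_k)=v(1-\zeta^k)=v(p)/(p-1)$ (from $\prod_{k=1}^{p-1}(1-\zeta^k)=p$). Krasner's lemma, applied to $\eta\notin K$, forces $v(\eta-\beta)\leq 0$ in the AS case and $v(\eta-\beta)\leq v(p)/(p-1)$ in the Kummer case for every $\beta\in K$. Feeding this bound into $g(\beta)=(\beta-\eta)^p-(\beta-\eta)$ (AS) or $g(\beta)=\prod_{\zeta^p=1}(\beta-\eta\zeta)$ (Kummer) yields $v(g(\beta))=p\,v(\beta-\eta)$, and comparing with the values of the other monomials in the $Q$-expansion one finds $\nu_Q(g)=p\,v(\eta-\beta)$ in both cases. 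Consequently $\{\nu_Q(g)\}_{Q\in\Psi_1}$ is bounded in $\Gamma$ by $p$ times the Krasner bound, so $\delta_g<\infty^-$.

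Finally I would analyse $B(g)\subseteq\{1,\ldots,p-1\}$. For $b\in\{2,\ldots,p-1\}$: in the AS case $a_{Qb}(g)=0$ rules $b$ out of $B(g)$ trivially; in the Kummer case $\nu(a_{Qb}(g)Q^b)=v(p)+b\,v(\eta-\beta)$ tends to $v(p)+bc$ as $v(\eta-\beta)\to c$ (where $c$ is the supremum of $\{v(\eta-\beta)\}$), while $\delta_g$ sits at $pc$; the inequality $v(p)+bc>pc$ is equivalent to $v(p)>(p-b)c$, which holds at the Krasner level since $(p-b)/(p-1)<1$, so $b\notin B(g)$. Thus $B(g)\subseteq\{1\}$. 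For $b=1$ one has $\nu(a_{Q1}(g)Q)=v(\eta-\beta)$ (AS) or $v(p)+v(\eta-\beta)$ (Kummer); a direct cut-comparison shows that $\nu(a_{Q1}(g)Q)\in\delta_g^L$ for every $Q$ with large enough value if and only if ${\rm dist}(\eta,K)$ reaches the Krasner bound in the precise cut-theoretic sense captured by Kuhlmann's $H^-$ (AS) or $v(p)/(p-1)+H^-$ (Kummer) for some convex subgroup $H\subseteq\Gamma$. Hence $1\in B(g)$ if and only if $(L/K,v)$ is independent, and $B(g)=\emptyset$ if and only if $(L/K,v)$ is dependent.

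The delicate step is the last cut-comparison in arbitrary rank. In rank one it reduces to a single inequality in $\Gamma$, but in higher rank one must track how $\{v(\eta-\beta)\}$ sits modulo each convex subgroup of $\Gamma$ and carefully match the resulting cut against the Kuhlmann form ${\rm dist}(\eta,K)=(\textrm{Krasner bound})+H^-$; making this identification precise, and ensuring compatibility with the quantifier ``for every $Q$ with large enough value'' in the definition of $B(g)$, will be the technical heart of the proof.
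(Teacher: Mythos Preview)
Your outline matches the paper's proof almost exactly: compute the $(x-\beta)$-expansion of $g$, show $\nu_{x-\beta}(g)=p\,v(\eta-\beta)$, bound $v(\eta-\beta)$ from above (hence $\delta_g<\infty^-$), observe that the only candidate for membership in $B(g)$ that matters is $b=1$, and finally relate $1\in B(g)$ to Kuhlmann's independence criterion. Two remarks on the execution.

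First, a minor point: your appeal to Krasner's lemma to get $v(\eta-\beta)\le 0$ (AS) resp.\ $\le v(p)/(p-1)$ (Kummer) presupposes that $(K,v)$ is henselian, which is not assumed. The paper sidesteps this by quoting \cite[Proposition~3.7]{KR} for the bound $\gamma\le\alpha+H^-$ in the Kummer case (and, in the AS case, the elementary observation $\nu(x-\beta)<0$ since $p\,\nu(x-\beta)=v(g(\beta))<\nu(x-\beta)$). Your Krasner argument can be repaired by passing to the henselization, but this should be said.

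Second, and more importantly, what you call ``the delicate step'' and defer --- the higher-rank verification that $1\in B(g)$ is equivalent to independence in the Kummer case --- is not a tail-end technicality but the actual content of the proof. The paper spends the bulk of its argument here: assuming $B(g)=\emptyset$ (i.e.\ for some $\rho$ one has $v(p)+\gamma_\rho>p\gamma_\sigma$ for all $\sigma$), it manufactures an $\epsilon>H$ with $\alpha-\gamma_\theta>\epsilon$ for all $\theta$, splitting into cases according to whether some difference $\gamma_\sigma-\gamma_\rho$ escapes $H$; conversely, assuming dependence, it locates the smallest convex subgroup $\Gamma_1$ violating $\gamma\le\alpha+\Gamma_1^-$, uses that $\Gamma_1/H$ has rank one to control $p(\gamma_\sigma-\gamma_\rho)$, and concludes $1\notin B(g)$. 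None of this is a ``direct cut-comparison''; it is a genuine convex-subgroup argument that your proposal does not supply. In the AS case the corresponding step is short (it reduces to the idempotence criterion $p\gamma=\gamma$ for cuts, quoting \cite[Proposition~4.2 and Lemma~2.14]{Kuhl}), but in the Kummer case it is where the work lies. Your plan is correct; the proof is not yet there.
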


\textbf{Acknowledgements.} I would like to thank Mark Spivakovsky for carefully reading, for providing useful suggestions and for pointing out a few mistakes in an earlier version of this paper. He also provided simpler and more complete arguments for some of the steps in the proofs.

\section{The defect of an augmentation}\label{definitiondeffect}
Let $\mu$ be a valuation on $K[x]$ with value group $\Gamma_\mu$. The \textbf{graded ring of $\mu$} is defined as
\[
\mathcal G_\mu:=\bigoplus_{\gamma\in \Gamma_\mu}\{f\in K[x]\mid \mu(f)\geq \gamma\}/\{f\in K[x]\mid \mu(f)> \gamma\}.
\]
For $h\in K[x]$ for which $\nu(h)\neq \infty$, we define the \textbf{initial form} of $h$ in $\mathcal G_\mu$ by
\[
\inv_\mu(h):=h+\{f\in K[x]\mid \mu(f)> \mu(h)\}\in \mathcal G_\mu.
\]

Let $(L/K,v)$ be a simple algebraic valued field extension (not necessarily of rank one). Consider the corresponding valuation $\nu$ on $K[x]$ with non-trivial support. For a key polynomial $Q$ for $\nu$ we can consider the graded ring of $\nu_Q$ which we denote by $\mathcal G_Q$ (instead of $\mathcal G_{\nu_Q}$). For $f\in K[x]$, with $\nu_Q(f)\neq \infty$, we denote $\inv_Q(f):=\inv_{\nu_Q}(f)$. Let
\[
R_Q:=\langle\{\inv_Q(f)\mid \deg(f)<\deg(Q)\}\rangle\mbox{ and }y_Q:=\inv_Q(Q)\in \mathcal G_Q.
\]
This means that $R_Q$ is the abelian subgroup of $\mathcal{G}_Q$ generated by the initial forms of polynomials of degree smaller than $\deg(Q)$. 
\begin{proposition}\cite[Proposition 4.5]{Nov1}
The set $R_Q$ is a subring of $\mathcal G_Q$, $y_Q$ is transcendental over $R_Q$ and
\[
\mathcal G_Q=R_Q[y_Q].
\]
\end{proposition}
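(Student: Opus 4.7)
The plan is to exploit the $Q$-expansion. The key identity I would record first is that for any nonzero $f\in K[x]$ with $Q$-expansion $f=\sum_{i=0}^r a_iQ^i$, the definition $\nu_Q(f)=\min_i\nu(a_iQ^i)$ combined with the equality $\nu_Q(h)=\nu(h)$ for $\deg(h)<\deg(Q)$ yields
\[
\inv_Q(f)=\sum_{i\in S_f}\inv_\nu(a_i)\,y_Q^i,\qquad S_f=\{i:\nu(a_iQ^i)=\nu_Q(f)\}.
\]
This formula expresses every initial form as a polynomial in $y_Q$ whose coefficients lie among the generators of $R_Q$, and it drives all three parts of the proposition.

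I would next establish the transcendence of $y_Q$ over $R_Q$, since this step uses only the abelian-group structure of $R_Q$. Given a relation $\sum_{i=0}^N c_iy_Q^i=0$ with $c_i\in R_Q$, I decompose into homogeneous parts of $\mathcal G_Q$ and assume all nonzero $c_iy_Q^i$ share a common value $\gamma$; then, using the opening identity applied to lifts, each nonzero $c_i$ can be written as $\inv_\nu(b_i)$ with $\deg(b_i)<\deg(Q)$ and $\nu(b_iQ^i)=\gamma$. The vanishing relation forces $\nu_Q(\sum_i b_iQ^i)>\gamma$, but $\sum_i b_iQ^i$ is already its own $Q$-expansion, so $\nu_Q$ of it equals $\min_i\nu(b_iQ^i)=\gamma$, a contradiction.

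The subring property is the main obstacle. For $f_1,f_2\in K[x]$ with $\deg(f_j)<\deg(Q)$, Proposition 2.6 gives $\inv_Q(f_1)\inv_Q(f_2)=\inv_Q(f_1f_2)$, so writing the $Q$-expansion $f_1f_2=h_0+h_1Q$ with $\deg(h_0),\deg(h_1)<\deg(Q)$, I must show $h_1=0$ or $\nu(h_0)<\nu(h_1Q)$. Fix a root $a\in\overline K$ of $Q$ with $\overline\nu(x-a)=\epsilon(Q)$. The key polynomial condition forces, for any $h\in K[x]$ with $\deg(h)<\deg(Q)$ and any root $b$ of $h$, the inequality $\overline\nu(x-b)<\epsilon(Q)=\overline\nu(x-a)$, so that from $x-b=(x-a)+(a-b)$ one reads off $\overline\nu(a-b)=\overline\nu(x-b)$ and hence $\overline\nu(h(a))=\nu(h)$. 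Substituting $x=a$ in $f_1f_2=h_0+h_1Q$ (where $Q(a)=0$) gives $\nu(h_0)=\nu(f_1)+\nu(f_2)$; combined with $\nu_Q(f_1f_2)=\min(\nu(h_0),\nu(h_1Q))=\nu(f_1)+\nu(f_2)$ this already yields $\nu(h_1Q)\geq\nu(h_0)$. Upgrading to strict inequality is the delicate point; I would do it by contradiction, observing that equality would yield $\inv_Q(f_1f_2)=\inv_\nu(h_0)+\inv_\nu(h_1)y_Q$ as the canonical $R_Q$-expansion in $y_Q$ (unique by the transcendence just proved), and then comparing this with the product $\inv_Q(f_1)\inv_Q(f_2)$ expanded through its degree-$0$ lifts to produce a nontrivial $R_Q$-relation forbidden by transcendence.

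Once the subring property is secured, $\mathcal G_Q=R_Q[y_Q]$ is immediate: the opening identity writes every initial form as an element of $R_Q[y_Q]$, and the reverse inclusion is tautological from $R_Q\subseteq\mathcal G_Q$ and $y_Q\in\mathcal G_Q$.
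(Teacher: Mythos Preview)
The paper does not give its own proof of this proposition; it is quoted from \cite{Nov1} and used as a black box, with the text resuming immediately at ``In view of the previous proposition\ldots''. So there is nothing in the present paper to compare your argument against, and I can only assess whether your outline stands on its own.

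The opening identity and the transcendence step are fine (modulo the harmless notational slip of writing $\inv_\nu$ where $\inv_Q$ is meant---these agree on polynomials of degree below $\deg(Q)$, but the initial forms live in $\mathcal G_Q$, not $\mathcal G_\nu$). The real problem is the subring step, exactly at the point you flag as delicate. You correctly obtain $\nu(h_0)=\nu(f_1)+\nu(f_2)$ by evaluating at a root $a$ of $Q$ with $\overline\nu(x-a)=\epsilon(Q)$, and hence $\nu(h_1Q)\ge\nu(h_0)$. But your upgrade to strict inequality is circular. If equality held you would have $\inv_Q(f_1f_2)=\inv_Q(h_0)+\inv_Q(h_1)\,y_Q$, and you propose to contradict this by ``expanding $\inv_Q(f_1)\inv_Q(f_2)$ through its degree-$0$ lifts''. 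But the only lifts available are $f_1$ and $f_2$ themselves, and the $Q$-expansion of their product \emph{is} $h_0+h_1Q$; there is no second, independent $R_Q$-expansion of $\inv_Q(f_1)\inv_Q(f_2)$ to compare with. Transcendence gives uniqueness of such an expansion when one exists, but it cannot force the $y_Q$-coefficient to vanish unless you already know that the product of two elements of $R_Q$ lies in $R_Q$---which is precisely the claim under proof.

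What is missing is an independent reason why $\nu(h_1)+\nu(Q)>\nu(h_0)$ whenever $h_1\ne 0$. This is where the key-polynomial hypothesis must do real work beyond the single evaluation at $a$. In \cite{Nov1} and the surrounding literature this comes from an analysis showing, in effect, that $\inv_Q(Q)$ does not divide $\inv_Q(h)$ for any nonzero $h$ of degree $<\deg(Q)$, using the strict inequality $\epsilon(\cdot)<\epsilon(Q)$ across all roots rather than only the distinguished one. Your sketch does not supply this ingredient.
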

In view of the previous proposition, for every $f\in K[x]$, with $\nu_Q(f)\neq \infty$, we can define the \textbf{degree of $f$ with respect to $Q$} as the degree of $\inv_Q(f)$ with respect to $y_Q$, i.e.,
\[
\deg_Q(f):=\deg_{y_Q}(\inv_Q(f)).
\]

For $n\in \N$, suppose that $\Psi_n$ does not have a maximum and that $\Psi_n$ admits a limit key polynomial $F$. By \cite[Theorem 6.2]{Nov1}, this defines a limit augmentation $\nu_Q\rightarrow \nu_F$. Hence, we can define the \textbf{defect of }$\Psi_n$ (denote by $d(\Psi_n)$) as the defect of $\nu_Q\rightarrow \nu_F$ as in \cite[Definition 6.2]{NN}. Namely,
\[
d(\Psi_n):=\lim_{Q\in \Psi_n}\{\deg_Q(F)\}.
\]
\begin{theorem}\label{importate}
Let $(L/K,v)$ be a simple algebraic valued field extension. Consider the corresponding valuation $\nu$ on $K[x]$ with non-trivial support. Let $n_1,\ldots,n_r\in\N$ be all the natural numbers $n$ for which $\Psi_n$ is a plateau. Then
\[
d(L/K,v)=\prod_{i=1}^r d(\Psi_{n_i}).
\]
Moreover, if ${\rm rk}(v)=1$, then for every $i$, $1\leq i\leq r$, for which $\Psi_{n_i}$  bounded in $\Gamma$ we have
\begin{equation}\label{equationsdefectaug}
d(\Psi_{n_i})=\frac{\deg(F)}{\deg(Q)}\mbox{ for every }Q\in \Psi_{n_i}\mbox{ and every }F\in {\rm KP}(\Psi_{n_i}).
\end{equation}
\end{theorem}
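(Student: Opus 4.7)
The plan is to handle the two assertions separately. The first (the product formula) will use the Mac Lane--Vaqui\'e chain representation of $\nu$ and the multiplicativity of the defect along such chains. The second (the explicit formula $d(\Psi_{n_i}) = \deg(F)/\deg(Q)$ in the bounded rank-one case) will use a direct analysis of the $Q$-expansion of a limit key polynomial $F$.

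For the product formula, I would invoke the existence of a Mac Lane--Vaqui\'e chain $\mu_0 \to \mu_1 \to \mu_2 \to \cdots$ of augmentations with final limit $\nu$. By the correspondence between plateaus of $\nu$ and limit augmentations in such a chain, already implicit in \cite[Theorems 6.1 and 6.2]{Nov1} cited in the introduction, each plateau $\Psi_{n_i}$ yields exactly one limit augmentation whose defect equals $d(\Psi_{n_i})$ by the definition adopted from \cite[Definition 6.2]{NN}. The ordinary augmentations inserted between consecutive plateaus (moving upward within a fixed-degree level) contribute a trivial factor $1$. Invoking the multiplicativity of defect along a Mac Lane--Vaqui\'e chain from \cite{NN} then gives
\[
d(L/K,v) = \prod_{i=1}^r d(\Psi_{n_i}).
\]

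For the bounded rank-one formula, fix $Q \in \Psi_{n_i}$ with $\nu(\Psi_{n_i})$ bounded in $\Gamma$, and fix $F \in \op{KP}(\Psi_{n_i})$. Write the $Q$-expansion $F = \sum_{j=0}^{D} a_{Qj}(F) Q^j$ with $a_{QD}(F) \neq 0$ and each $\deg(a_{Qj}(F)) < n_i$. Monicity of $F$ and $Q$ forces $a_{QD}(F)$ to be monic of degree $\deg(F) - D n_i \in [0, n_i)$. The plan is then to establish two statements. First, for $Q$ with $\nu(Q)$ sufficiently close to $\sup \nu(\Psi_{n_i})$, the initial form $\op{in}_Q(F) \in R_Q[y_Q]$ is a (graded) unit multiple of $y_Q^D$, so $\deg_Q(F) = D$; taking the limit in $Q$ yields $d(\Psi_{n_i}) = D$. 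Second, $\deg(a_{QD}(F)) = 0$, so $\deg(F) = D n_i$; this follows from the minimality of $\deg(F)$ among key polynomials of degree exceeding $n_i$, since a non-constant $a_{QD}(F)$ would allow one to extract, from an irreducible factor of $\op{in}_Q(F)$ of $y_Q$-degree $D$, a key polynomial of degree strictly between $n_i$ and $\deg(F)$, contradicting minimality.

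The main obstacle is the first step of the bounded formula: proving that $\op{in}_Q(F)$ is concentrated at the top power $y_Q^D$ for $Q$ with $\nu(Q)$ close to the supremum of $\nu(\Psi_{n_i})$. This is where rank one and boundedness combine essentially. The archimedean character of $\Gamma$ permits controlled comparisons between the values $\nu(a_{Qj}(F)) + j\nu(Q)$ as $\nu(Q)$ increases, and the boundedness of $\nu(\Psi_{n_i})$ ensures that the family $\{\nu_Q\}_{Q \in \Psi_{n_i}}$ converges (in a precise sense) to the limit valuation $\nu_F$, forcing the lower-degree contributions in the $Q$-expansion to eventually strictly exceed the top one. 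In the unbounded case no such concentration is available, which explains why the explicit formula is restricted to bounded plateaus in the statement.
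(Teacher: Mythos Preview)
Your treatment of the product formula matches the paper's proof essentially exactly: both build a Mac Lane--Vaqui\'e chain $\nu_{Q_1}\to\cdots\to\nu_{Q_s}=\nu$ from representatives of each nonempty $\Psi_m$ (choosing a maximal-value $Q_j$ when $\Psi_{m_j}$ has a maximum), invoke \cite[Theorems 6.1 and 6.2]{Nov1} to justify that this is a proper chain, apply the multiplicativity result \cite[Theorem 6.14]{NN}, and discard the ordinary augmentations via \cite[Lemma 6.3]{NN}.

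For the bounded rank-one formula the paper takes a different route from yours: it does not argue directly at all but simply invokes \cite[Corollary 7.7]{NN}. Your direct sketch is reasonable in spirit but has two imprecisions worth flagging. First, $\inv_Q(F)$ is \emph{not} a unit multiple of $y_Q^D$: the constant term $\inv_Q(a_{Q0}(F))$ also achieves the minimum value $\nu_Q(F)$ for large $Q$ (this is precisely what forces $\nu_Q(F)<\nu(F)$ and makes $F$ a limit key polynomial), so the initial form has the shape $\inv_Q(a_{Q0}(F))+y_Q^D$. The weaker statement you actually need, $\deg_Q(F)=D$, only requires that the top term \emph{attain} (not strictly beat) the minimum, and that is what the rank-one boundedness argument delivers. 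Second, your mechanism for $\deg(a_{QD}(F))=0$ --- extracting an intermediate-degree key polynomial from an irreducible factor of $\inv_Q(F)$ of $y_Q$-degree $D$ --- does not work as stated: a lift of such a factor has degree $D\,n_i+\deg(a_{QD}(F))=\deg(F)$, not smaller. The correct input here is the general fact that the leading $Q$-coefficient of any key polynomial is $1$ (equivalently, consecutive key-polynomial degrees divide one another); the paper itself uses this later, citing \cite[Proposition 3.5]{michael}. With these two repairs your direct argument can be completed, but it then amounts to reproving the content behind \cite[Corollary 7.7]{NN}, which explains why the paper simply cites it.
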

\begin{proof}
Let $\{m_1,\ldots,m_s\}$ be the set of natural numbers $m$ for which $\Psi_m$ is non-empty. For $j$, $1\leq j\leq s$, if $\Psi_{m_j}$ has a maximum, then we choose $Q_j\in \Psi_{m_j}$ such that $\nu(Q_j)$ is the maximum. If $\Psi_{m_j}$ does not have a maximum (i.e., $m_j\in\{n_1,\ldots,n_r\}$), then we choose any $Q_j\in \Psi_{m_j}$. It follows from \cite[Theorems 6.1 and 6.2]{Nov1} that
\[
\nu_{Q_1}\rightarrow \nu_{Q_2}\rightarrow\ldots\rightarrow \nu_{Q_s}=\nu
\]
is a proper chain for $\nu$. By \cite[Theorem 6.14]{NN}, we have
\[
d(L/K,v)=\prod_{j=1}^{s-1}d\left(\nu_{Q_j}\rightarrow \nu_{Q_{j+1}}\right)=\prod_{i=1}^rd(\Psi_{n_i}).
\]
The last equality holds because $d\left(\nu_{Q_j}\rightarrow \nu_{Q_{j+1}}\right)=1$ if the augmentation is ordinary (\cite[Lemma 6.3]{NN}) and by the definition of $d(\Psi_{n_i})$.

If ${\rm rk}(v)=1$, then \eqref{equationsdefectaug} follows from \cite[Corollary 7.7]{NN}.
\end{proof}

\section{The subset $I$}
For this section we assume that ${\rm rk}(v)=1$, so we can suppose that $vL\subseteq \R$. Fix $n\in \N$ for which $\Psi_n$ does not have a maximum and is bounded in $\Gamma$. Throughout this section we will fix a limit key polynomial $F$ for $\Psi_n$. 

Write $F=L(Q)$ for some $L(X)\in K[x]_n[X]$ and denote $D:=\deg_X(L)$. The polynomial $L(X)$ depends on $Q$ and can be obtained from the $Q$-expansion of $F$. Namely,
\[
L(X)=a_{Q0}(F)+a_{Q1}(F)X+\ldots+a_{QD}(F)X^D.
\]
Set
\[
B=\lim_{Q\in \Psi_n}\nu(Q)\mbox{ and }\overline B=\lim_{Q\in \Psi_n}\nu_Q(F).
\]
Take $Q_0\in \Psi_n$ and choose $Q\in \Psi_n$ such that
\begin{equation}\label{eqaescolhaimpor}
\epsilon(Q)-\epsilon(Q_0)>D(B-\nu(Q)).
\end{equation}
\begin{remark}
One can show that $\overline B= D\cdot B$ and that for $Q$ with large enough value, we have $\nu_Q(F)=D\cdot \nu(Q)$. Hence, condition \eqref{eqaescolhaimpor} is equivalent to
\begin{equation}\label{eqaescolhaimpor2}
\epsilon(Q)-\epsilon(Q_0)>\overline B-\nu_Q(F).
\end{equation}
\end{remark}
We will consider the ring $K(x)[X]$ where $X$ is an indeterminate and let $\partial_i$ denote the $i$-th  Hasse derivative with respect to $X$. Then, for every $l(X)\in K(x)[X]$ and $a,b\in K(x)$ we have the Taylor expansion
\[
l(b)=l(a)+\sum_{i=1}^{\deg_Xl}\partial_il(a)(b-a)^i.
\]

For simplicity of notation, we will take a well-ordered family $\{Q_\rho\}_{\rho<\lambda}$ whose values $\gamma_\rho:=\nu(Q_\rho)$ are larger than $\nu(Q)$ and form a cofinal family in $\nu(\Psi_n)$. For each $\rho<\lambda$, set $h_\rho=Q-Q_\rho$. In particular, we can consider the Taylor expansion of $F$ with respect to $h_\rho$:
\begin{equation}\label{taylorexpan}
F=L(h_\rho)+\sum_{i=1}^D\partial_{i}L(h_\rho)Q_\rho^{i}.
\end{equation}
For simplicity of notation we will denote $\nu_\rho:=\nu_{Q_\rho}$ for every $\rho<\lambda$.
\begin{lemma}\cite[Corollary 2.5]{NOVSpiv}\label{Corolarcurucial}
If $\deg(f)< \deg(F)$, $f=l(Q)$ for some $l(X)\in K[x]_n[X]$, then there exists $\rho$ such that
\[
\nu(l(h_\sigma))=\nu(f)=\nu_\sigma(f)=\nu(a_{\sigma 0}(f))
\]
for every $\sigma$, $\rho< \sigma<\lambda$.
\end{lemma}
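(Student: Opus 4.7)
The plan is to exploit the Taylor expansion
\[
l(Q) \;=\; l(h_\sigma) \;+\; \sum_{i=1}^{D-1} \partial_i l(h_\sigma)\, Q_\sigma^i,
\]
and show that for $\sigma$ past some threshold, the constant term $l(h_\sigma)$ is strictly $\nu$-dominant over all higher-order summands. This will simultaneously deliver $\nu(f)=\nu(l(h_\sigma))$, identify $l(h_\sigma) \bmod Q_\sigma$ as $a_{\sigma 0}(f)$, and force $\nu_\sigma(f)$ to be attained at the $i=0$ term of the $Q_\sigma$-expansion.

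First I would note that $\nu(h_\sigma)=\nu(Q-Q_\sigma)=\nu(Q)$ is immediate from the ultrametric inequality since $\gamma_\sigma>\nu(Q)$. Writing $l(X)=\sum_{j=0}^{D-1} c_j X^j$ with $c_j\in K[x]_n$, set
\[
\alpha \;=\; \min_{0\le j\le D-1}\{\nu(c_j)+j\nu(Q)\}.
\]
The key algebraic claim is the no-cancellation statement $\nu(l(h_\sigma))=\alpha$, together with the derivative bound $\nu(\partial_i l(h_\sigma))\ge \alpha-i\nu(Q)$ for $i\ge 1$. The derivative bound then follows formally from the no-cancellation statement applied to the lower-degree polynomial $\partial_i l$, since its coefficients are scalar multiples of the $c_j$ and the same minimum controls the sum.

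Granting these, we obtain
\[
\nu\bigl(\partial_i l(h_\sigma)\,Q_\sigma^i\bigr)\;\ge\;\alpha+i(\gamma_\sigma-\nu(Q))\;>\;\alpha
\]
for every $i\ge 1$, so the Taylor expansion gives $\nu(f)=\alpha=\nu(l(h_\sigma))$. Because the higher-order terms strictly exceed $\alpha$, reducing $l(h_\sigma)$ modulo $Q_\sigma$ yields $a_{\sigma 0}(f)$, and the same strict inequality propagates to the full $Q_\sigma$-expansion: $\nu_\sigma(f)=\nu(a_{\sigma 0}(f))=\nu(l(h_\sigma))=\nu(f)$.

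The hard part is the no-cancellation claim. A priori $\nu(l(h_\sigma))\ge\alpha$ by the ultrametric inequality, but equality can fail if several monomials $c_j h_\sigma^j$ of value $\alpha$ cancel in the graded ring. This is precisely what hypothesis \eqref{eqaescolhaimpor} is designed to exclude: the strict lower bound $\epsilon(Q)-\epsilon(Q_0)>D(B-\nu(Q))$ forces $\inv_Q(h_\sigma)=\inv_Q(Q)=y_Q$ and guarantees that the initial forms $\inv_Q(c_j)$ remain nonzero and $R_Q$-linearly independent against the powers of $y_Q$ in $\mathcal G_Q=R_Q[y_Q]$, so no cancellation at value $\alpha$ can occur. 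Translating the geometric $\epsilon$-inequality into this algebraic non-vanishing in $\mathcal G_Q$, via the degree bound $\deg_X l<D$ that comes from $\deg(f)<\deg(F)$, is the technical heart of the argument.
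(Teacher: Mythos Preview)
The paper does not supply its own proof of this lemma; it is quoted as \cite[Corollary~2.5]{NOVSpiv}. Your proposal must therefore be judged on its own.

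There is a genuine gap. Your quantity $\alpha=\min_j\{\nu(c_j)+j\nu(Q)\}$ is exactly $\nu_Q(f)$, since $f=\sum_j c_jQ^j$ \emph{is} the $Q$-expansion of $f$. But $Q$ is the fixed element of $\Psi_n$ chosen once and for all via \eqref{eqaescolhaimpor}; it is not adapted to the particular $f$, and for a generic $f$ of degree $<\deg F$ one has $\nu_Q(f)<\nu(f)$. In that case the lemma asserts $\nu(l(h_\sigma))=\nu(f)>\alpha$, so cancellation among the monomials $c_jh_\sigma^j$ \emph{must} occur, contrary to your central claim. Already for $n=1$, $Q=x-a$, $l(X)=c_0+c_1X$ with $v(c_0)=v(c_1)+v(\eta-a)$ and $b:=a-c_0/c_1$ a strictly better approximation to $\eta$ than $a$, one has $f=c_1(x-b)$ and, for large $\sigma$, $\nu(l(h_\sigma))=v(c_1)+v(\eta-b)=\nu(f)>\alpha$. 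Your proposed justification also fails literally: $h_\sigma=Q-Q_\sigma$ has $x$-degree $<n$ because $Q$ and $Q_\sigma$ are both monic of degree $n$, so $\inv_Q(h_\sigma)\in R_Q$ and cannot equal $y_Q$. The threshold $\rho$ in the statement must depend on $f$; the correct argument lets $Q_\sigma$ (not $Q$) grow past that threshold so that $\nu_\sigma(f)=\nu(f)$ with the minimum in the $Q_\sigma$-expansion realized at the constant term, and the Taylor comparison is then carried out with the stabilized values $\nu(\partial_i l(Q))$ rather than with your naive lower bound $\alpha-i\nu(Q)$.
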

For each $i$, $1\leq i\leq D$, the polynomial $\partial_iL(Q)$ has degree smaller than $\deg(F)$. Hence, by Lemma \ref{Corolarcurucial} there exists $\rho_0<\lambda$ such that
\begin{equation}\label{equanajudamuito}
\beta_i:=\nu(\partial_iL(Q))=\nu(\partial_iL(h_{\rho}))\mbox{ for every }\rho, \rho_0\leq\rho<\lambda.
\end{equation}
Moreover, by \cite[Lemma 4]{Kap}, we can take $\rho_0$ so large that for every $j,k$, $1\leq j<k \leq D$, we have
\begin{equation}\label{equalelineslope}
\beta_j+j\gamma_\rho\neq\beta_k+k\gamma_\rho\mbox{ for every }\rho_0\leq \rho<\lambda.
\end{equation}
From now on, we will only consider $\rho$ (and consequently $\sigma$ and $\theta$ appearing below), such that \eqref{equanajudamuito} and \eqref{equalelineslope} are always satisfied (i.e., $\min\{\rho,\sigma,\theta\}>\rho_0$). 

For each $\rho<\lambda$ denote
\[
F=a_{\rho 0}(F)+a_{\rho 1}(F)Q_\rho+\ldots+a_{\rho r}(F)Q_\rho^r
\]
the $Q_\rho$-expansion of $F$.

\begin{lemma}\cite[Lemma 4.2]{NOVSpiv}\label{Lemaparacabcomtuo}
Fix $\rho<\lambda$ and for each $i$, $0\leq i\leq D$, set $b_{\rho i}:=a_{\rho 0}(\partial_iL(h_\rho))$. Then
\[
\nu_\rho(\partial_iL(h_\rho)-b_{\rho i})+i\nu(Q_\rho)>\overline B.
\]
\end{lemma}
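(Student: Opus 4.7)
I would exploit the Hasse--Taylor expansion of $\partial_iL$ around $Q$ rather than around $h_\rho$, namely
\[
\partial_iL(h_\rho)\;=\;\partial_iL(Q-Q_\rho)\;=\;\sum_{k=0}^{D-i}(-1)^k\binom{k+i}{k}\partial_{k+i}L(Q)\,Q_\rho^k.
\]
Since every $k\geq 1$ summand is divisible by $Q_\rho$, applying $a_{\rho 0}$ to both sides yields the clean identity $b_{\rho i}=a_{\rho 0}(\partial_iL(Q))$, in which $\partial_iL(Q)$ is a polynomial of degree $<\deg F$ independent of $\rho$ and thus falls squarely within the scope of Lemma~\ref{Corolarcurucial}.

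Next I would substitute the $Q_\rho$-expansion of each $\partial_{k+i}L(Q)$ into the Hasse--Taylor identity and collect powers of $Q_\rho$ to obtain, for every $p\geq 1$,
\[
a_{\rho p}(\partial_iL(h_\rho))\;=\;\sum_{k=0}^{p}(-1)^k\binom{k+i}{k}\,a_{\rho,p-k}(\partial_{k+i}L(Q)),
\]
with the convention $\partial_jL=0$ for $j>D$. The target inequality then reduces to showing $\nu(a_{\rho p}(\partial_iL(h_\rho)))+(i+p)\gamma_\rho>\overline B$ for every $p\geq 1$. For each summand, Lemma~\ref{Corolarcurucial} applied to $\partial_{k+i}L(Q)$ gives $\nu_\rho(\partial_{k+i}L(Q))=\beta_{k+i}$, hence the bound $\nu(a_{\rho,p-k}(\partial_{k+i}L(Q)))+(p-k)\gamma_\rho\geq \beta_{k+i}$, so after multiplying by $Q_\rho^{i+p}$ each summand contributes at least $\beta_{k+i}+(k+i)\gamma_\rho$ to the total $\nu$-value.

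The hard part will be the extremal slope $l:=k+i=D$: here $\beta_D+D\gamma_\rho=D\gamma_\rho$, strictly smaller than $\overline B=DB$, so a pure term-by-term estimate cannot succeed. This is where the choice condition~\eqref{eqaescolhaimpor2} on $Q$, together with the slope-distinctness~\eqref{equalelineslope}, must enter to force cancellation in the alternating sum above. The saving observation is that $\partial_DL(Q)=c_D\in K[x]_n$ is constant in $X$, so its $Q_\rho$-expansion has only a $Q_\rho^0$-term; consequently the extremal slope contributes to $a_{\rho p}(\partial_iL(h_\rho))$ through exactly one summand (namely $p=k=D-i$). A careful bookkeeping of the alternating signs combined with~\eqref{eqaescolhaimpor2} then shows that this extremal contribution cancels against the subleading terms to yield strict inequality against $\overline B$. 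For every other $p$, only slopes $l<D$ appear in the sum, and~\eqref{eqaescolhaimpor2} combined with~\eqref{equalelineslope} supplies the needed bound. Assembling these estimates across all $p\geq 1$ completes the proof.
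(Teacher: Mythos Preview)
First, note that the paper does \emph{not} give its own proof of this lemma: it is quoted verbatim from \cite[Lemma~4.2]{NOVSpiv}. So there is no in-paper argument to compare your route against; the only question is whether your plan can be made into a proof.

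Your setup is correct. The Hasse--Taylor identity
\[
\partial_iL(h_\rho)=\sum_{k=0}^{D-i}(-1)^k\binom{k+i}{k}\partial_{k+i}L(Q)\,Q_\rho^k
\]
is valid, and substituting the $Q_\rho$-expansions of the $\partial_{k+i}L(Q)$ indeed yields
$a_{\rho p}(\partial_iL(h_\rho))=\sum_{k=0}^{p}(-1)^k\binom{k+i}{k}a_{\rho,p-k}(\partial_{k+i}L(Q))$
and $b_{\rho i}=a_{\rho 0}(\partial_iL(Q))$.

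The gap is in the estimation step. Your term-by-term bound gives, for the $k$-th summand (multiplied by $Q_\rho^{\,i+p}$), only
$\nu\bigl(a_{\rho,p-k}(\partial_{l}L(Q))\bigr)+(i+p)\gamma_\rho\ \ge\ \beta_{l}+l\gamma_\rho$,
with $l=k+i$. You then assert that for $l<D$ the pair \eqref{eqaescolhaimpor2}--\eqref{equalelineslope} forces $\beta_l+l\gamma_\rho>\overline B$. This is false in general: the whole point of the paper is that the set $B_n$ of indices $l<D$ with $\beta_l+l\gamma_\rho\le\overline B$ for all large $\rho$ can be nonempty (see the later Corollaries and the geometric picture in Section~\ref{geometric}). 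Condition~\eqref{equalelineslope} gives only pairwise distinctness of the $\beta_l+l\gamma_\rho$, never a magnitude bound, and \eqref{eqaescolhaimpor2} involves $\epsilon(Q)-\epsilon(Q_0)$, a quantity your argument never connects to the $\beta_l$'s. In particular, for $k=p$ the summand has value exactly $\nu\!\left(\binom{p+i}{p}\right)+\beta_{p+i}$, so whenever $p+i\in B_n$ and the binomial coefficient is a unit your lower bound sits strictly below $\overline B$ and term-by-term estimation cannot conclude.

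The proposed rescue in the extremal case $p=D-i$ (``alternating signs cause cancellation'') is not a valid mechanism for valuations: an ultrametric inequality is indifferent to signs unless two terms share the \emph{same} minimal value, which \eqref{equalelineslope} explicitly rules out among the $\beta_l+l\gamma_\rho$. (In equal characteristic the $k=D-i$ term even vanishes since $\binom{p^{d}}{i}=0$ for $0<i<p^d$, but in mixed characteristic it does not, and no cancellation argument is available.) The actual proof in \cite{NOVSpiv} uses the choice \eqref{eqaescolhaimpor} through a comparison involving $\epsilon(Q)$ that controls the \emph{higher} $Q_\rho$-coefficients of polynomials of degree $<\deg F$; this is the missing ingredient your outline does not supply.
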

\begin{corollary}\label{corolallryuniquenes}
With the notation above, we have
\[
\nu\left(a_{\rho i}(F)Q_\rho^i\right)>\overline B\Longleftrightarrow \beta_i+i\gamma_\rho>\overline B.
\]
\end{corollary}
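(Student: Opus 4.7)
The plan is to reduce the statement to Lemma~\ref{Lemaparacabcomtuo} by extracting the $Q_\rho$-expansion of $F$ from the Taylor expansion \eqref{taylorexpan} and comparing it term by term with the sum $\sum_i b_{\rho i} Q_\rho^i$. First, I would further expand each $\partial_i L(h_\rho)$ in its own $Q_\rho$-expansion as
$$\partial_i L(h_\rho) = b_{\rho i} + \sum_{j \geq 1} c_{ij}\, Q_\rho^j,$$
where $c_{ij} := a_{\rho j}(\partial_i L(h_\rho))$ has degree less than $n$. By Lemma~\ref{Lemaparacabcomtuo}, every summand on the right satisfies
$$\nu\bigl(c_{ij}\, Q_\rho^j\bigr) \geq \nu_\rho\bigl(\partial_i L(h_\rho) - b_{\rho i}\bigr) > \overline B - i\gamma_\rho,$$
and consequently $\nu(c_{ij}\, Q_\rho^{i+j}) > \overline B$ for all $i$ and all $j \geq 1$.

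Substituting these expansions into \eqref{taylorexpan} and collecting powers of $Q_\rho$, I obtain the $Q_\rho$-expansion of $F$ itself, because each coefficient $b_{\rho k} + \sum_{i+j=k,\, j\geq 1} c_{ij}$ has degree less than $n$. By the uniqueness of the $Q_\rho$-expansion,
$$a_{\rho k}(F) - b_{\rho k} = \sum_{i+j=k,\, j \geq 1} c_{ij},$$
and the ultrametric inequality together with the estimate above yields the crucial bound
$$\nu\bigl((a_{\rho k}(F) - b_{\rho k})\, Q_\rho^k\bigr) > \overline B \quad \text{for every } k.$$

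Next, I would compare $\nu(b_{\rho i})$ with $\beta_i$. Since $\beta_i = \nu(\partial_i L(h_\rho))$ by \eqref{equanajudamuito} and $\nu(\partial_i L(h_\rho) - b_{\rho i}) > \overline B - i\gamma_\rho$ by Lemma~\ref{Lemaparacabcomtuo}, the ultrametric inequality applied to $\partial_i L(h_\rho) = b_{\rho i} + (\partial_i L(h_\rho) - b_{\rho i})$ splits into two cases: if $\beta_i + i\gamma_\rho \leq \overline B$, then $\beta_i < \nu(\partial_i L(h_\rho) - b_{\rho i})$ forces $\nu(b_{\rho i}) = \beta_i$; if $\beta_i + i\gamma_\rho > \overline B$, then both summands have $\nu$-value exceeding $\overline B - i\gamma_\rho$, hence $\nu(b_{\rho i}) + i\gamma_\rho > \overline B$.

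Finally, decomposing $a_{\rho i}(F)\, Q_\rho^i = b_{\rho i}\, Q_\rho^i + (a_{\rho i}(F) - b_{\rho i})\, Q_\rho^i$ and applying the ultrametric inequality one last time gives both implications simultaneously: the error term always contributes $\nu$-value greater than $\overline B$, while the first summand has $\nu$-value $\beta_i + i\gamma_\rho$ in the case $\beta_i + i\gamma_\rho \leq \overline B$ and strictly exceeds $\overline B$ otherwise. No step is genuinely delicate; the main bookkeeping is in the first paragraph, where one must track how the contributions of the different $\partial_i L(h_\rho)$ recombine into the $Q_\rho$-expansion of $F$ in order to establish the error estimate for $a_{\rho k}(F) - b_{\rho k}$.
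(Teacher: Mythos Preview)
Your proposal is correct and follows essentially the same route as the paper: decompose $a_{\rho i}(F)=b_{\rho i}+G$ via the Taylor expansion \eqref{taylorexpan}, use Lemma~\ref{Lemaparacabcomtuo} to bound every contribution to $G$ above $\overline B - i\gamma_\rho$, and then invoke the ultrametric inequality. The paper compresses all of this into the single phrase ``the result follows trivially from Lemma~\ref{Lemaparacabcomtuo}'', whereas you spell out the recombination of the $Q_\rho$-expansions and the two-case comparison of $\nu(b_{\rho i})$ with $\beta_i$; but there is no substantive difference in strategy.
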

\begin{proof}
By the Taylor expansion of $F$ with respect to $h_\rho$, we have
\[
a_{\rho i}(F)=b_{\rho i}+G
\]
where
\[
G=\sum_{i\neq j}a_{\rho i}\left(\partial_jL(h_\theta)\right)=\sum_{i\neq j}a_{\rho i}\left(\partial_jL(h_\theta)-b_{\rho j}\right).
\]
Hence, the result follows trivially from Lemma \ref{Lemaparacabcomtuo}.
\end{proof}\medskip

Denote by $J_\rho(F)$ the set
\[
J_\rho(F)=\{j\in\{1,\ldots,r\}\mid \nu\left(a_{\rho j}(F)Q_\rho^j\right)>\overline B\}.
\]

\begin{corollary}\label{corolalinelimit}
For $i\notin J_\rho(F)$ we have
\[
\nu(a_{\rho i}(F))=\beta_i.
\] 
\end{corollary}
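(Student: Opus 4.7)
The proof follows by pushing the decomposition already used in the proof of Corollary \ref{corolallryuniquenes} one step further, keeping track of the exact valuation of each piece rather than just whether it lies above or below $\overline B$. My plan is first to reproduce the explicit formula for $a_{\rho i}(F)$, then to bound the non-constant contribution strictly from below by $\beta_i$, and finally to identify the constant contribution $b_{\rho i}$ with a polynomial of valuation exactly $\beta_i$.

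Substituting the $Q_\rho$-expansion $\partial_j L(h_\rho)=\sum_k a_{\rho k}(\partial_j L(h_\rho))\,Q_\rho^k$ into the Taylor formula \eqref{taylorexpan} and collecting the coefficient of $Q_\rho^i$ gives
\[
a_{\rho i}(F)=b_{\rho i}+G_i,\qquad G_i=\sum_{0\le j<i}a_{\rho,\,i-j}\bigl(\partial_j L(h_\rho)\bigr),
\]
with the $j=i$ term supplying $b_{\rho i}$ and the rest collected into $G_i$. For each $j<i$ the index $i-j\ge 1$, so $a_{\rho,\,i-j}(\partial_j L(h_\rho))$ is one of the higher-order $Q_\rho$-expansion coefficients of $\partial_j L(h_\rho)-b_{\rho j}$. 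Lemma \ref{Lemaparacabcomtuo} then yields
\[
\nu\bigl(a_{\rho,\,i-j}(\partial_j L(h_\rho))\bigr)+(i-j)\gamma_\rho\ge \nu_\rho\bigl(\partial_j L(h_\rho)-b_{\rho j}\bigr)>\overline B-j\gamma_\rho,
\]
i.e.\ $\nu(a_{\rho,\,i-j}(\partial_j L(h_\rho)))>\overline B-i\gamma_\rho$, and summing over the finitely many $j$ produces $\nu(G_i)>\overline B-i\gamma_\rho$.

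To finish, I would identify $\nu(b_{\rho i})$ with $\beta_i$ by a second ultrametric argument: applying Lemma \ref{Lemaparacabcomtuo} at index $i$ and using $\nu\ge\nu_\rho$ gives $\nu(\partial_i L(h_\rho)-b_{\rho i})>\overline B-i\gamma_\rho\ge\beta_i$, where the last inequality uses the hypothesis $i\notin J_\rho(F)$ through Corollary \ref{corolallryuniquenes}. Since $\nu(\partial_i L(h_\rho))=\beta_i$ by \eqref{equanajudamuito}, the ultrametric inequality forces $\nu(b_{\rho i})=\beta_i$. Combining this with $\nu(G_i)>\overline B-i\gamma_\rho\ge\beta_i=\nu(b_{\rho i})$ and applying the ultrametric inequality one final time yields $\nu(a_{\rho i}(F))=\nu(b_{\rho i}+G_i)=\nu(b_{\rho i})=\beta_i$, as required. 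The one delicate point is that the hypothesis $i\notin J_\rho(F)$ is needed precisely to convert the weak inequality $\overline B-i\gamma_\rho\ge\beta_i$ into the strict comparison $\nu(G_i)>\beta_i$ that drives the final ultrametric step; everything else is bookkeeping on top of Lemma \ref{Lemaparacabcomtuo}.
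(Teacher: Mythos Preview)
Your proof is correct and follows essentially the same route as the paper: the paper's one-line proof ``It follows again from the definition of $J_\rho(F)$ and Lemma \ref{Lemaparacabcomtuo}'' is exactly the argument you spell out, using the decomposition $a_{\rho i}(F)=b_{\rho i}+G_i$ already introduced in the proof of Corollary \ref{corolallryuniquenes}, bounding $\nu(G_i)$ via Lemma \ref{Lemaparacabcomtuo}, and then invoking $i\notin J_\rho(F)$ (equivalently $\beta_i+i\gamma_\rho\le\overline B$) to force $\nu(b_{\rho i})=\beta_i$ and hence $\nu(a_{\rho i}(F))=\beta_i$. You have simply supplied the details the paper leaves implicit.
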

\begin{proof}
It follows again from the definition of $J_\rho(F)$ and Lemma \ref{Lemaparacabcomtuo}.
\end{proof}\medskip

\begin{corollary}
If $\rho<\sigma$, then $J_\rho(F)\subseteq J_\sigma(F)$.
\end{corollary}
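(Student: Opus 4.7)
The plan is to reduce the inclusion to a simple monotonicity statement about the quantities $\beta_i + i\gamma_\rho$, using Corollary~\ref{corolallryuniquenes} as the main translation tool. Concretely, I would begin by rewriting
\[
J_\rho(F)=\{i\in\{1,\ldots,D\}\mid \beta_i+i\gamma_\rho>\overline B\},
\]
which is exactly what Corollary~\ref{corolallryuniquenes} tells us (using also the remark that the $Q_\rho$-expansion of $F$ has fixed length $D$ for large enough $\rho$, so the index set can be taken uniformly). Crucially, the numbers $\beta_i=\nu(\partial_iL(Q))$ appearing here are intrinsic: by~\eqref{equanajudamuito} they are independent of the choice of $\rho$, provided $\rho>\rho_0$. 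Since the convention in this section is to consider only such $\rho$, both $J_\rho(F)$ and $J_\sigma(F)$ are described by the same fixed family of thresholds $\{\beta_i\}_{i=1}^D$.

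With this in hand, the rest of the argument is essentially immediate. The well-ordered family $\{Q_\rho\}_{\rho<\lambda}$ is cofinal in $\Psi_n$, and since $\mathrm{rk}(v)=1$ so $vL\subseteq\R$ and $\Psi_n$ has no maximum, we may (and implicitly already do) choose the family so that $\rho<\sigma$ implies $\gamma_\rho<\gamma_\sigma$. Then for any $i\in J_\rho(F)$ we have $\beta_i+i\gamma_\rho>\overline B$, and because $i\geq 1$,
\[
\beta_i+i\gamma_\sigma \;>\; \beta_i+i\gamma_\rho \;>\; \overline B.
\]
Applying Corollary~\ref{corolallryuniquenes} in the opposite direction yields $i\in J_\sigma(F)$, giving $J_\rho(F)\subseteq J_\sigma(F)$.

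There is essentially no obstacle; the only point to be careful about is the monotonicity of the values $\gamma_\rho$ along the well-ordering, which is built into the choice of cofinal family and is what makes the one-line inequality $\beta_i+i\gamma_\sigma>\beta_i+i\gamma_\rho$ valid. Everything else is a direct application of Corollary~\ref{corolallryuniquenes} and the stability of $\beta_i$ from~\eqref{equanajudamuito}.
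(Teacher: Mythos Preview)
Your proposal is correct and is precisely the unpacking of the paper's one-line proof, which simply says the result ``follows trivially from Corollary~\ref{corolallryuniquenes}.'' You use the same reduction $J_\rho(F)=\{i\mid \beta_i+i\gamma_\rho>\overline B\}$ together with the monotonicity $\gamma_\rho<\gamma_\sigma$ and $i\ge 1$, exactly as intended.
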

\begin{proof}
Follows trivially from Corollary \ref{corolallryuniquenes}.
\end{proof}\medskip

Since $\{1,\ldots,D\}$ is finite, there exists $\rho_1$, $\rho_0\leq \rho_1<\lambda$ such that for every $\rho$, $\rho_1\leq \rho<\lambda$ we have $J_\rho(F)= J_{\rho_1}(F)$. Set
\[
B_n(F)=\{1,\ldots,D-1\}\setminus J_{\rho_1}(F).
\] 

For a subset $S$ of $\{1,\ldots,D-1\}$ and $\rho$, $\rho_1\leq \rho<\lambda$ we denote by
\[
F_{S,\rho}=a_{\rho 0}(F)+\sum_{s\in S}a_{\rho s}(F)Q_\rho^s+a_{\rho D}Q_\rho^D.
\]
\begin{proposition}\label{existenceunuequenusI}
Take $\rho<\lambda$, $\rho_1\leq \rho<\lambda$, and $S\subseteq B_n(F)$. Then $F_{S,\rho}$ is a limit key polynomial for $\Psi_n$ if and only if $S=B_n(F)$.
\end{proposition}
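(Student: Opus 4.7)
The plan is to characterize when $F_{S,\rho}$ is a limit key polynomial for $\Psi_n$ by comparing it to $F$ through the $\nu$-value of their difference, and show this criterion selects exactly $S=B_n(F)$.

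The starting point is the direct computation
$$F - F_{S,\rho} = \sum_{j\in \{1,\ldots,D-1\}\setminus S} a_{\rho j}(F)\,Q_\rho^j.$$
By Corollary \ref{corolalinelimit}, for $j\in B_n(F)$ we have $\nu\bigl(a_{\rho j}(F)Q_\rho^j\bigr) = \beta_j + j\gamma_\rho$, whereas for $j\in J_{\rho_1}(F)\cap\{1,\ldots,D-1\}$ the definition gives $\nu\bigl(a_{\rho j}(F)Q_\rho^j\bigr) > \overline B$. By \eqref{equalelineslope}, all the values $\beta_j+j\gamma_\rho$ are pairwise distinct (and a similar distinctness can be extracted for the $J_{\rho_1}(F)$-indices by choosing $\rho_1$ large), so the ultrametric inequality yields
$$\nu(F-F_{S,\rho}) = \min_{j\in\{1,\ldots,D-1\}\setminus S}\nu\bigl(a_{\rho j}(F)Q_\rho^j\bigr).$$
Thus $\nu(F-F_{B_n(F),\rho}) > \overline B$, while for any $S\subsetneq B_n(F)$, taking $j_0\in B_n(F)\setminus S$ minimizing the value gives $\nu(F-F_{S,\rho}) = \beta_{j_0}+j_0\gamma_\rho \leq \overline B$.

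For the direction $S=B_n(F)\Rightarrow F_{B_n(F),\rho}$ is a limit key polynomial, I would argue that since $F - F_{B_n(F),\rho}$ has $\nu$-value strictly greater than $\overline B$, choosing $\sigma>\rho$ sufficiently large (using that the $\nu_\sigma$ converge to $\nu$ on any fixed polynomial) gives $\nu_\sigma(F-F_{B_n(F),\rho}) > \overline B > D\gamma_\sigma = \nu_\sigma(F)$. The ultrametric inequality then forces $\nu_\sigma(F_{B_n(F),\rho}) = \nu_\sigma(F)$, and in fact $\inv_\sigma(F_{B_n(F),\rho}) = \inv_\sigma(F)$ in $\mathcal{G}_\sigma$, for all $\sigma$ sufficiently large. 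Since $F_{B_n(F),\rho}$ is monic of degree $Dn=\deg(F)$ and its truncations agree with those of $F$ cofinally, the criterion for a limit key polynomial (via Theorem 6.2 of \cite{Nov1}) is inherited from $F$.

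For the converse direction, assume $S\subsetneq B_n(F)$ and suppose for contradiction that $F_{S,\rho}$ is a limit key polynomial for $\Psi_n$. Then $F_{S,\rho}$ is a key polynomial of degree $Dn$ above $\Psi_n$, so by the analogous argument applied to $F_{S,\rho}$, its own $\nu_\sigma$-truncations tend to $\overline B$. In particular, for $\sigma$ large, $\nu_\sigma(F_{S,\rho})=D\gamma_\sigma$. Then $\nu_\sigma(F - F_{S,\rho}) \to \nu(F-F_{S,\rho}) = \beta_{j_0}+j_0\gamma_\rho \leq \overline B$. But a direct expansion of $F - F_{S,\rho}$ in the $Q_\sigma$-adic expansion, combined with Lemma \ref{Lemaparacabcomtuo} applied to the surviving derivatives, shows that the monomial of index $j_0$ contributes a term with value $\beta_{j_0}+j_0\gamma_\sigma < \overline B$, which cannot be cancelled by the $J_{\rho_1}(F)$-terms of higher value. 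This produces a lower-degree polynomial with value below the limit level achievable by $F_{S,\rho}$'s own expansion, contradicting the key polynomial property $\deg(f)<\deg(F_{S,\rho})\Rightarrow \epsilon(f)<\epsilon(F_{S,\rho})$.

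The main obstacle is making the last contradiction precise: one must carefully translate the condition $\nu(F - F_{S,\rho})\leq \overline B$ into an actual failure of the key polynomial axiom for $F_{S,\rho}$, since $\nu$-values and $\epsilon$-values relate only indirectly. I expect this to require re-expanding $F - F_{S,\rho}$ in the $Q_\sigma$-adic expansion and using the Taylor argument of \eqref{taylorexpan} together with Lemma \ref{Lemaparacabcomtuo} to isolate the surviving term of value $\beta_{j_0}+j_0\gamma_\sigma$, from which the failure of the expected convergence $\nu_\sigma(F_{S,\rho})\to\overline B$ can be read off.
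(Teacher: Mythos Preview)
Your forward direction ($S=B_n(F)\Rightarrow F_{S,\rho}$ is a limit key polynomial) is essentially the paper's argument: once $\nu_\sigma(F-F_{B_n(F),\rho})>\overline B>\nu_\sigma(F)$ for $\sigma$ large, the ultrametric gives $\nu_\sigma(F_{B_n(F),\rho})=\nu_\sigma(F)$, and the conclusion follows from $\deg(F_{B_n(F),\rho})=\deg(F)$.

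The converse, however, has a genuine gap, which you yourself flag in the last paragraph. You correctly compute $\nu(F-F_{S,\rho})=\beta_{j_0}+j_0\gamma_\rho<\overline B$ for some $j_0\in B_n(F)\setminus S$, but then you try to extract a contradiction with the $\epsilon$-definition of key polynomial. That translation from $\nu$-values to $\epsilon$-values is neither direct nor needed, and your attempt drifts (the appearance of ``$\beta_{j_0}+j_0\gamma_\sigma$'' with a moving $\sigma$ is a symptom). The working criterion you should contradict is the one you already invoke in your hypothesis: a monic $G$ of degree $\deg(F)$ is a limit key polynomial for $\Psi_n$ exactly when $\{\nu_\sigma(G)\}_\sigma$ fails to stabilise (equivalently, is cofinal in $\overline B$).

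The paper does precisely this, in one line. Comparing $F_{S,\rho}$ with $F_{B_n(F),\rho}$ (or, equivalently, with $F$), the difference has degree $<\deg(F)$ and hence its $\nu_Q$-value stabilises at $\beta_{j_0}+j_0\gamma_\rho$. Since $\nu_Q(F_{B_n(F),\rho})=\nu_Q(F)$ eventually exceeds this fixed value, the ultrametric forces
\[
\nu_Q(F_{S,\rho})=\min\bigl(\nu_Q(F_{B_n(F),\rho}),\,\nu_Q(F_{S,\rho}-F_{B_n(F),\rho})\bigr)=\beta_{j_0}+j_0\gamma_\rho
\]
for all such $Q$. Thus $\nu_Q(F_{S,\rho})$ is eventually constant, strictly below $\overline B$, and $F_{S,\rho}$ is not a limit key polynomial. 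No re-expansion in $Q_\sigma$, no second use of the Taylor argument or Lemma~\ref{Lemaparacabcomtuo}, and no appeal to $\epsilon$-values is required.
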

\begin{proof}
Suppose that $S=B_n(F)$. Then for every $Q\in \Psi_n$, with $\nu(Q)\geq \nu(Q_\rho)$ we have
\[
\nu_Q\left(F-F_{S,\rho}\right)=\min_{j\in J_{\rho}(F)}\left\{\nu_Q\left(a_{\rho j}(F)Q_\rho^j\right)\right\}=\min_{j\in J_{\rho}(F)}\left\{\nu\left(a_{\rho j}(F)Q_\rho^j\right)\right\}>\overline B>\nu_Q(F). 
\]
Hence, $\nu_Q(F)=\nu_Q(F_{S,\rho})$. Since $\deg(F)=\deg(F_{S,\rho})$ we conclude that $F_{S,\rho}$ is also a limit key polynomial for $\Psi_n$. 

Suppose now that $S\subsetneq B_n(F)$. For any $Q\in \Psi_n$ such that
\[
\nu_Q(F)>\beta_h+h\gamma_\rho:=\min_{k\in B_n(F)\setminus S}\{\beta_k+k\gamma_\rho\}
\]
we have (by \eqref{equalelineslope})
\[
\nu_Q(F_{S,\rho}-F_{B_n(F),\rho})=\beta_h+h\gamma_\rho<\nu_Q(F)=\nu_Q(F_{B_n(F),\rho}).
\]
Hence, $\nu_Q(F_{S,\rho})=\beta_h+h\gamma_\rho$, which implies that $F_{S,\rho}$ is not a limit key polynomial for $\Psi_n$.
\end{proof}

\begin{proposition}
Let $H$ be another limit key polynomial for $\Psi_n$. Then $B_n(F)=B_n(H)$.
\end{proposition}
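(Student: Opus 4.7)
My plan is to reduce the claim $B_n(F) = B_n(H)$ to proving $\nu_{Q_\rho}(F-H) \geq \overline{B}$ for all sufficiently large $\rho$, and then to extract the equality by a coefficient-by-coefficient comparison in the $Q_\rho$-expansions of $F$ and $H$. As a preliminary, Theorem \ref{importate} gives that $F$ and $H$ are monic of the same degree $D\cdot n$ with $D=d(\Psi_n)$, so that $a_{\rho D}(F) = a_{\rho D}(H) = 1$ for every $\rho$ and the $Q_\rho$-expansion of $F-H$ has $Q_\rho$-degree at most $D-1$. Writing $H = L_H(Q)$ analogously to $F = L(Q)$, I would set $\beta_j^H := \nu(\partial_j L_H(Q))$.

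\textbf{Key estimate.} The central step is to show $\nu_{Q_\rho}(F-H) \geq \overline{B}$ for $\rho$ large. By the Remark following \eqref{eqaescolhaimpor}, $\nu_{Q_\rho}(F) = D\gamma_\rho$ for such $\rho$; combining this with $\nu_{Q_\rho}(F) = \min_{j}(\beta_j + j\gamma_\rho)$ forces $\beta_j \geq (D-j)\gamma_\rho$ for every $j$ and every large $\rho$, hence $\beta_j \geq (D-j)B$ upon passing $\gamma_\rho \to B$, and the same bound holds for $\beta_j^H$. Since $\gamma_\rho < B$ strictly, this gives $\beta_j + j\gamma_\rho > D\gamma_\rho$ whenever $j < D$, so the minimum defining $\nu_{Q_\rho}(F)$ is attained uniquely at $j = D$ (and likewise for $H$). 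Hence $\inv_{Q_\rho}(F) = y_{Q_\rho}^D = \inv_{Q_\rho}(H)$, and so $\nu_{Q_\rho}(F-H) > D\gamma_\rho$. Because $\deg(F-H) < \deg(F)$, Lemma \ref{Corolarcurucial} then gives $\nu_{Q_\rho}(F-H) = \nu(F-H)$ for $\rho$ large; since this constant exceeds $D\gamma_\rho$ for every such $\rho$ while $\gamma_\rho \to B$, I conclude $\nu(F-H) \geq DB = \overline{B}$.

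\textbf{Conclusion.} Now I would argue by contradiction: suppose $j \in B_n(F)\setminus B_n(H)$. Because $j \in B_n(F)$, $\beta_j + j\gamma_\rho \leq \overline{B}$ for large $\rho$, which combined with $\beta_j \geq (D-j)B$ forces $\beta_j = (D-j)B$; thus $\nu(a_{\rho j}(F)Q_\rho^j) = (D-j)B + j\gamma_\rho < \overline{B}$ strictly (because $\gamma_\rho < B$). Because $j \in J_\rho(H)$, $\nu(a_{\rho j}(H)Q_\rho^j) > \overline{B}$. The ultrametric inequality then gives
\[
\nu\!\left((a_{\rho j}(F) - a_{\rho j}(H))Q_\rho^j\right) = \nu(a_{\rho j}(F)Q_\rho^j) < \overline{B},
\]
contradicting $\nu_{Q_\rho}(F-H) \geq \overline{B}$. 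Hence $B_n(F) \subseteq B_n(H)$, and swapping the roles of $F$ and $H$ yields the reverse inclusion.

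\textbf{Main obstacle.} The delicate part is the bootstrapping in the key estimate: the ``easy'' bound $\nu_{Q_\rho}(F-H) > D\gamma_\rho$ (obtained from the coincidence of initial forms) must be upgraded to the sharper bound $\nu_{Q_\rho}(F-H) \geq \overline{B}$, and it is precisely Lemma \ref{Corolarcurucial} that lets one convert a family of per-$\rho$ strict inequalities into a single bound at the level of the limit value $\overline{B}$.
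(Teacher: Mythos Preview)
Your proof is correct and follows essentially the same route as the paper's: both establish $\nu_{Q_\rho}(F-H)\geq\overline B$ for large $\rho$ via Lemma~\ref{Corolarcurucial} and then compare $Q_\rho$-coefficients. The paper's version of the key estimate is more economical---it simply observes that $\nu_\rho(h)$ stabilizes while $\nu_\rho(F)$ and $\nu_\rho(H)$ increase without bound in $\nu(\Psi_n)$, forcing the stable value to be at least $\overline B$---so your detour through $\inv_{Q_\rho}(F)=y_{Q_\rho}^D$ and the bound $\beta_j\geq (D-j)B$ is unnecessary, though not wrong.
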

\begin{proof}
Since both $F$ and $H$ are monic, the polynomial $h=H-F$ has degree smaller than $\deg(F)$. Hence, there exists $\theta<\lambda$ such that $\nu_\sigma(h)=\nu_\theta(h)$ for every $\sigma$, $\theta\leq \sigma<\lambda$. Since $\{\nu_\rho(F)\}_{\rho<\lambda}$ and $\{\nu_\rho(H)\}_{\rho<\lambda}$ are increasing, this implies that
\begin{equation}\label{equationsigma}
\nu_\sigma(h)\geq \overline B\mbox{ and }\nu_\sigma(F)=\nu_\sigma(H)\mbox{ for every }\sigma,\ \theta\leq\sigma<\lambda.
\end{equation}

Take $j\in B_n(H)$. This means that for every $\sigma$, $\theta<\sigma<\lambda$, we have $\nu\left(a_{\sigma j}(H)Q_\sigma^j\right)< \overline B$. Since $a_{\sigma j}(F)=a_{\sigma j}(H)+a_{\sigma j}(h)$, this and \eqref{equationsigma} imply that
\[
\nu\left(a_{\sigma j}(F)Q_\sigma^j\right)=\nu\left(a_{\sigma j}(H)Q_\sigma^j\right)< \overline{B}.
\]
Hence $B_n(H)\subseteq B_n(F)$. The other inclusion follows by the symmetric argument.
\end{proof}\medskip

Since the set $B_n(F)$ does not depend on the choice of $F$, we will denote it  by $B_n$.

When referring to a polynomial $Q\in \Psi_n$ with large enough value we mean that
\begin{equation}\label{menainglarge}
Q\mbox{ satisfies }\eqref{eqaescolhaimpor}, \ \nu(Q)>\nu(Q_{\rho_0})\mbox{ and }\nu(Q)>\nu(Q_{\rho_1}).
\end{equation}
\begin{corollary}\label{corolastablecoeff}
For every key polynomial $F$ for $\Psi_n$ and every $\sigma$ for which $Q_\sigma\in \Psi_n$ satisfies \eqref{menainglarge} we have
\[
B_n\subseteq L_{Q_\sigma}(F).
\]
\end{corollary}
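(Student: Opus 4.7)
The plan is to show that for each $j \in B_n$ the coefficient $a_{\sigma j}(F)$ is nonzero, whence $j \in L_{Q_\sigma}(F)$. Since the preceding proposition gives $B_n(H) = B_n$ for every limit key polynomial $H$, there is no loss of generality in taking $F$ to be the limit key polynomial fixed at the start of the section; for a general $F$, the set $B_n(F) = B_n$ is the same.

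The first step I would take is to augment the well-ordered cofinal family $\{Q_\rho\}_{\rho < \lambda}$ by inserting the given $Q_\sigma$ at a new index $\sigma^*$ with value $\gamma_{\sigma^*} := \nu(Q_\sigma)$. Condition \eqref{menainglarge} guarantees $\gamma_{\sigma^*} > \gamma_{\rho_0}$, $\gamma_{\sigma^*} > \gamma_{\rho_1}$, and that $Q_\sigma$ satisfies \eqref{eqaescolhaimpor}, so the stability properties \eqref{equanajudamuito} and \eqref{equalelineslope} remain valid at $\sigma^*$. Consequently the entire Taylor-expansion apparatus, in particular Corollaries \ref{corolallryuniquenes} and \ref{corolalinelimit}, applies to $Q_\sigma$ exactly as to the elements of the original family.

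The second step is to show $J_{\sigma^*}(F) = J_{\rho_1}(F)$. Monotonicity of $J_\rho(F)$ in $\rho$ (proved just after Corollary \ref{corolallryuniquenes}), combined with $\gamma_{\sigma^*} > \gamma_{\rho_1}$, gives $J_{\rho_1}(F) \subseteq J_{\sigma^*}(F)$. For the reverse inclusion, cofinality of the original family provides an index $\rho$ with $\gamma_\rho > \gamma_{\sigma^*}$; then by monotonicity and the stabilization of $J_\rho(F)$ for $\rho \geq \rho_1$ we obtain $J_{\sigma^*}(F) \subseteq J_\rho(F) = J_{\rho_1}(F)$.

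For the conclusion: if $j \in B_n = \{1, \ldots, D-1\} \setminus J_{\rho_1}(F) = \{1, \ldots, D-1\} \setminus J_{\sigma^*}(F)$, then by the definition of $J_{\sigma^*}(F)$ we have $\nu(a_{\sigma j}(F) Q_\sigma^j) \leq \overline B < \infty$, forcing $a_{\sigma j}(F) \neq 0$ and $j \in L_{Q_\sigma}(F)$. The proof is essentially bookkeeping; the one point that demands care is the legitimacy of inserting $Q_\sigma$ into the cofinal family without breaking the earlier inductive hypotheses, and this is precisely what the definition of \emph{large enough value} in \eqref{menainglarge} was designed to ensure.
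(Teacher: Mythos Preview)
Your argument is correct and follows essentially the same route as the paper. The only difference is one of emphasis: the paper applies Corollaries~\ref{corolallryuniquenes} and~\ref{corolalinelimit} directly to obtain the exact value $\nu(a_{\sigma i}(F)Q_\sigma^i)=\beta_i+i\gamma_\sigma<\overline B$, whereas you establish $J_{\sigma^*}(F)=J_{\rho_1}(F)$ by a monotonicity-and-sandwich argument and then read off the nonvanishing from the definition of $J$. Your explicit discussion of inserting $Q_\sigma$ into the cofinal family is a point the paper handles tacitly, so your version is slightly more careful on the bookkeeping, but the underlying content is the same.
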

\begin{proof}
Take $i\in B_n$. By Corollary \ref{corolallryuniquenes} and Corollary \ref{corolalinelimit} we have
\[
\nu\left(a_{\sigma i}(F)Q_\sigma^i\right)=\beta_i+i\gamma_\sigma<\overline{B}.
\]
In particular, $i\in L_{Q_\sigma}(F)$.
\end{proof}

\subsection{Geometric interpretation of $B_n$}\label{geometric}
For $F\in {\rm KP}(\Psi_n)$ and $Q\in \Psi_n$ we denote by $\Delta_Q(F)$ the \textbf{Newton polygon of $F$ with respect to $Q$}. This is defined as the lowest part of the convex hull of 
\[
\{(i,\nu(a_{Qi}(F)))\mid i\in \N_0\}
\]
in $\Q\times \Gamma$. If $\Psi_n$ is bounded, then for large enough $Q$ the set $\Delta_Q(F)$ is the line segment connecting $(p^d,0)$ and $(0,p^d\nu(Q))$ for $p^d=d(\Psi_n)$ (by \cite[Proposition 3.2 and Lemma 4.2]{NOVSpiv}). Consider the line $\pi$ passing through $(p^d,0)$ and $(0,\overline B)$. Since $\overline B=p^dB$, this is the line with equation
\[
\pi(y)=-By+\overline B.
\]
\begin{lemma}
For $k\in \{1,\ldots, D-1\}$ we have $k\in B_n$ if and only if $(k,\beta_k)\in \pi$. 
\end{lemma}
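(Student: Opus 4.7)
My plan is to translate both conditions into statements about $\beta_k + k\gamma_\rho$, and then to play an upper bound coming from the definition of $B_n$ against a lower bound coming from the Newton polygon. By Corollary \ref{corolallryuniquenes}, the condition $k \in B_n$ (for $k \in \{1, \ldots, D-1\}$) is equivalent to $\beta_k + k\gamma_\rho \leq \overline B$ for every $\rho \geq \rho_1$, while by the definition of $\pi$, the condition $(k, \beta_k) \in \pi$ is simply the equation $\beta_k + kB = \overline B$.

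For the implication $(\Rightarrow)$, I would combine two opposite estimates on $\beta_k + k\gamma_\rho$ and then pass to the limit. The upper estimate $\beta_k + k\gamma_\rho \leq \overline B$ is immediate from $k \in B_n$ via Corollary \ref{corolallryuniquenes}. For the lower estimate, I would use the Newton polygon description recalled at the start of this subsection: for $Q_\rho$ with large enough value, $\Delta_{Q_\rho}(F)$ is the segment from $(D, 0)$ to $(0, D\gamma_\rho)$, so every point $(i, \nu(a_{\rho i}(F)))$ lies on or above the line through these two endpoints. Since $k \in B_n$, Corollary \ref{corolalinelimit} identifies $\nu(a_{\rho k}(F)) = \beta_k$, and the Newton polygon constraint then reads $\beta_k + k\gamma_\rho \geq D\gamma_\rho$. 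Letting $\rho \to \lambda$, so that $\gamma_\rho \to B$ and hence $D\gamma_\rho \to DB = \overline B$, the upper and lower bounds squeeze to $\overline B$ and force $\beta_k + kB = \overline B$, i.e., $(k, \beta_k) \in \pi$.

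For the converse $(\Leftarrow)$, I would use that $\Psi_n$ has no maximum, which gives the strict inequality $\gamma_\rho < B$ for every $\rho < \lambda$. Assuming $\beta_k + kB = \overline B$ and $k \geq 1$, this yields $\beta_k + k\gamma_\rho = \overline B - k(B - \gamma_\rho) < \overline B$ for every $\rho$. Corollary \ref{corolallryuniquenes} then excludes $k$ from every $J_\rho(F)$, so $k \in B_n$.

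The only substantive step is the Newton polygon lower bound in the forward direction; after that, the limit sandwich and the converse are routine, since $\beta_k$ is independent of $\rho$ by Corollary \ref{corolalinelimit} and $\gamma_\rho$ is cofinal in $\nu(\Psi_n)$ below $B$.
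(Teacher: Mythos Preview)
Your proof is correct and follows essentially the same route as the paper's: both arguments sandwich $\beta_k + k\gamma_\rho$ between the Newton-polygon lower bound $D\gamma_\rho$ (which the paper cites directly from \cite[Proposition~3.2 and Lemma~4.2]{NOVSpiv}) and the upper bound $\overline B$ coming from $k\notin J_\rho(F)$, then pass to the limit $\gamma_\rho\to B$ to force equality, while the converse in both cases is the one-line observation that $\gamma_\rho<B$ makes the upper bound strict. Your write-up is slightly more explicit than the paper's in invoking Corollary~\ref{corolalinelimit} to identify $\nu(a_{\rho k}(F))$ with $\beta_k$ before applying the Newton-polygon inequality, but the substance is identical.
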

\begin{proof}
By \cite[Proposition 3.2 and Lemma 4.2]{NOVSpiv}, for every $\rho<\lambda$ with large enough value we have $p^d\gamma_\rho\leq \beta_k+k\gamma_\rho$. Hence, $k\in B_n$ if and only if
\[
p^d\gamma_\rho\leq\beta_k+k\gamma_\rho<\overline{B}=p^d B\mbox{ for every }\rho<\lambda. 
\]
Taking the supremum of each of the expressions, this is equivalent to
\[
p^d B\leq \beta_k+kB\leq p^bB.
\]
This is equivalent to $\beta_k=-Bk+\overline B$ and this happens if and only if $(k,\beta_k)\in \pi$.
\end{proof}\medskip

In Figure \ref{figuranesmple} below we present the characterization of the set $B_n$ using Newton polygons describe above. We consider $Q\in\Psi_n$ with large enough value and $F\in{\rm KP}(\Psi_n)$. The Newton polygon $\Delta_Q(F)$ is represented in blue. The blue dots represent the points $(i,\nu(a_{Qi}(F)))$. The line $\pi$ is represented in red.
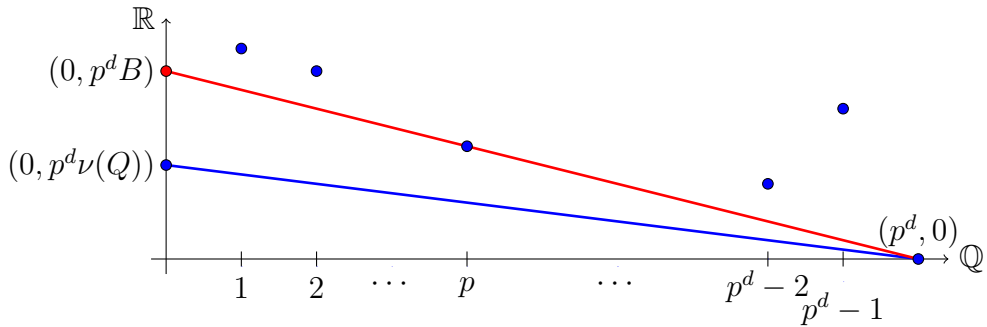
\begin{figure}[H]
    \centering
\begin{tikzpicture}
    \draw[->] (-0.2,0) -- (10.4,0) node[right] {$\Q$};
    \draw[->] (0,-0.2) -- (0,3.2) node[left] {$\R$};
    
    \draw[-] (1,-0.1) -- (1,0.1);
    \draw[-] (2,-0.1) -- (2,0.1);
    \draw[-] (4,-0.1) -- (4,0.1);
    \draw[-] (8,-0.1) -- (8,0.1);
    \draw[-] (9,-0.1) -- (9,0.1);

  \draw [line width=1.pt,color=blue] (10,0)-- (0,1.25);
  \draw [line width=1.pt,color=red] (10,0)-- (0,2.5);

\filldraw[fill=blue] (1,-0.1) circle (0pt) node[below] {$1$};
\filldraw[fill=blue] (2,-0.1) circle (0pt) node[below] {$2$};
\filldraw[fill=blue] (3,-0.1) circle (0pt) node[below] {$\cdots$};
\filldraw[fill=blue] (4,-0.1) circle (0pt) node[below] {$p$};
\filldraw[fill=blue] (6,-0.1) circle (0pt) node[below] {$\cdots$};
\filldraw[fill=blue] (8,0) circle (0pt) node[below] {$p^d-2$};
\filldraw[fill=blue] (9,-0.3	) circle (0pt) node[below] {$p^d-1$};

   \filldraw[fill=red] (0,2.5) circle (2pt) node[left] {$(0,p^dB)$};
  \filldraw[fill=blue] (10,0) circle (2pt) node[above] {$(p^d,0)$};
  \filldraw[fill=blue] (0,1.25) circle (2pt) node[left] {$(0,p^d\nu(Q))$};
  \filldraw[fill=blue] (1,2.8) circle (2pt);
  \filldraw[fill=blue] (2,2.5) circle (2pt);
  \filldraw[fill=blue] (4,1.5) circle (2pt);
  \filldraw[fill=blue] (8,1) circle (2pt);
  \filldraw[fill=blue] (9,2) circle (2pt);
  
    %\draw[domain=1:5] plot (\x,{3*\x/4+5/4});
    %\draw[domain=2:5] plot (\x,{4*\x/3-5/3});
\end{tikzpicture}
\caption{In this example, $p\in B_n$ and $1,2,p^{d-2},p^d-1\notin B_n$}\label{figuranesmple}
\end{figure}

\section{Proof of Theorem \ref{mainthm}}
\noindent\textit{Proof of Theorem \ref{mainthm}:} Since $\nu(g)=\infty$ there exist finitely many $n\in\N$ for which $\Psi_n\neq \emptyset$. Let $\{n_1,\ldots,n_r\}$ ($n_1<\ldots<n_r$) be the set all the natural numbers for which $\Psi_n$ is a plateau. By Theorem \ref{importate} we have 
\[
d(\Psi_{n_i})\mid d(L/K,v)=p^d.
\] 
Hence, for each $i$, $1\leq i\leq r$, $d(\Psi_{n_i})=p^{d_i}$ for some $d_i\in\N_0$. The numbers $d_1,\ldots, d_r$ are uniquely determined and $d=d_1+\ldots+d_r$. Moreover, since ${\rm rk}(v)=1$ the set $\Psi_{n_i}$ is bounded for $1\leq i<r$. It follows from \eqref{equationsdefectaug} that $d_i>0$, $1\leq i<r$. 

For every $i$, $1\leq i<r$, consider the set $B_{n_i}$ constructed in the previous section. Set
\[
I_i:=\{j\in \N_0\mid p^j\in B_{n_i}\}.
\]
By \cite[Theorem 1.1]{NOVSpiv} every element of $B_{n_i}$ is a power of $p$, i.e., $B_{n_i}=p^{I_i}$. If $\Psi_{n_r}$ is bounded, then we also define $I_r$ in the analogous way.

For each $i$ such that $\Psi_{n_i}$ is bounded, by Corollary \ref{corolastablecoeff}, for every $F\in {\rm KP}(\Psi_{n_i})$ take $Q_i\in\Psi_{n_i}$ satisfying \eqref{menainglarge}. For every $Q\in \Psi_{n_i}$ with $\nu(Q)\geq \nu(Q_i)$ we have $p^{I_i}\subseteq L_Q(F)$ (by Corollary \ref{corolastablecoeff}). Observe that $a_{Q D}(F)=1$ (by \cite[Proposition 3.5]{michael}) and $D=p^{n_i}$ (by Theorem \ref{importate}). By Proposition \ref{existenceunuequenusI},
\[
a_{Q0}(F)+\sum_{j\in I_i}a_{Qp^j}(F)Q^{p^j}+Q^{p^{d_i}}
\]
is a limit key polynomial for $\Psi_{n_i}$.

Take $i$, $1\leq i\leq r$, such that $\Psi_{n_i}$ is bounded. Suppose that $I$ is any subset satisfying the conditions \textbf{(iv)} and \textbf{(v)} of Theorem \ref{mainthm}. Since for every $Q$ with large enough value,
\[
F_i:=a_{Q0}(F)+\sum_{j\in I_i}a_{Qp^j}(F)Q^{p^j}+Q^{p^{d_i}}
\]
is a limit key polynomial for $\Psi_n$, we deduce from \textbf{(iv)} that $I\subseteq I_i$ (because $a_{Qp^j}(F_i)=0$ if $j\notin I_i$). On the other hand, by Proposition \ref{existenceunuequenusI} we cannot have $I\subsetneq I_i$. Hence, the set $I_i$ is uniquely determined. This concludes the proof of Theorem \ref{mainthm}.

\section{Defect extensions of degree $p$}

\subsection{The rank one case}
We will proceed with the proof of Proposition \ref{independengrankeone}.

\begin{proof}
Since $(L/K,v)$ is an defect extension of degree $p$ it is immediate. In particular, $\Psi_1$ does not have a maximum and is bounded in $\Gamma$. Since $\nu_{x-b}(g)<\infty=\nu(g)$ the plateau $\Psi_1$ admits a limit key polynomial. Theorem \ref{mainthm} implies that $g$ is a limit key polynomial for $\Psi_1$ (because for any limit key polynomial $F$ for $\Psi_1$ we have $p\leq \deg(F)$).

Assume that \eqref{sitAS} is satisfied. Since ${\rm rk}(v)=1$ we can assume that $\Gamma\subseteq\R$. We set
\begin{equation}\label{equapagamas}
\gamma=\sup\{v(\eta-b)\mid b\in K\}\in \R.
\end{equation}
Then ${\rm dist}(\eta,K)=\gamma^-$. Since the only non-trivial convex subgroup of $\Gamma$ is $\{0\}$, $(L/K,v)$ is independent if and only if $\gamma=0$. 

For each $b\in K$ we have
\[
g=(x-b)^p-(x-b)+g(b).
\]
Hence,
\begin{equation}\label{equajaudaup}
\nu_{x-b}(g)=v(g(b))=p\cdot\nu(x-b).
\end{equation}
Set
\begin{equation}\label{equajaudaup1}
\delta=\sup\{\nu_{x-b}(g)\mid b\in K\}=\sup\{\nu_Q(g)\mid Q\in \Psi_1\}.
\end{equation}
By \eqref{equajaudaup} and \eqref{equajaudaup1} we conclude that $\delta=p\cdot \gamma$. By definition of $I_1$ we have $0\in I_1$ if and only if $\delta=\gamma$ and this is satisfied if and only if $\gamma=0$.

Assume now that \eqref{sitkummer} is satisfied. Denote by $\alpha=\frac{v(p)}{p-1}\in \Gamma$. For any $b\in K$ we have
\begin{equation}\label{equqatikummer}
g=(x-b)^p+pb(x-b)^{p-1}+\ldots+pb^{p-1}(x-b)+(b^p-a).
\end{equation}
By \cite[Proposition 3.7]{KR} we have $\gamma\leq\alpha$. In particular, $\nu_{x-b}(g)=p\cdot\nu(x-b)$ and consequently $\delta=p\cdot\gamma$ for $\gamma$ and $\delta$ as in \eqref{equapagamas} and \eqref{equajaudaup1}. Again ${\rm dist}(\eta,K)=\gamma^-$ and analogously to the Artin-Schreier case, the condition for being independent is satisfied if and only if $\gamma=\alpha$. On the other hand, by \eqref{equqatikummer} the condition $0\in I_1$ is equivalent to
\[
v(p)+\gamma=\delta=p\cdot\gamma
\]
and this is equivalent to $\gamma=\alpha$. This ends the proof of Proposition \ref{independengrankeone}. 
\end{proof}\medskip

In what follows, we present the geometric description, as in Section \ref{geometric}, of each case. In Figures \ref{ASextensions} and \ref{Kummerextensions} below we represent the geometric characterization of situations \eqref{sitAS} and \eqref{sitkummer}, respectively. The blue line represents the Newton polygon $\Delta_{x-b}(g)$ for $\nu(x-b)$ large enough. The red line represents the line $\pi$ connecting $(0,\delta)$ and $(p,0)$. This line has equation $\pi(y)=-\gamma y+\delta$. 

For the Artin-Schreier case, we consider the corresponding points that define $\Delta_{x-b}(g)$:
\[
P_1=(0,p\cdot\nu(x-b)),\ P_2=(1,0) \mbox{ and }P_3=(p,0).
\]
In this case, $\gamma\leq 0$. One can see that $0\in I_1$ (i.e., $P_2$ lies on $\pi$) if and only if $\gamma=0$.
\begin{figure}[H]
\centering
\subfigure[Independent extension ($I_1=\{0\}$)]{
\begin{tikzpicture}[line cap=round,line join=round,>=triangle 45,x=0.8cm,y=0.8cm]
    \draw[->] (-0.1,0) -- (6,0) node[right] {$\Q$};
    \draw[->] (0,-2.0) -- (0,2) node[left] {$\R$};

  \draw [line width=1.pt,color=blue] (0,-1.7)-- (4.5,0);
  \draw [line width=1.pt,color=red] (0,0)-- (4.5,0);

\filldraw[fill=red] (0,0) circle (2pt) node[left] {$(0,p\gamma)$};
  
  \filldraw[fill=blue] (4.5,0) circle (2pt) node[above] {$P_3$};
  \filldraw[fill=blue] (1,0) circle (2pt) node[above] {$P_2$};
  \filldraw[fill=blue] (0,-1.7) circle (2pt) node[left] {$P_1$};

\end{tikzpicture}

}
%\quad % espaco separador
\subfigure[Dependent extension ($I_1=\emptyset$)]{
\begin{tikzpicture}[line cap=round,line join=round,>=triangle 45,x=0.8cm,y=0.8cm]
    \draw[->] (-0.1,0) -- (6,0) node[right] {$\Q$};
    \draw[->] (0,-2.0) -- (0,2.0) node[left] {$\R$};

  \draw [line width=1.pt,color=blue] (0,-1.7)-- (4.5,0);
  \draw [line width=1.pt,color=red] (0,-1)-- (4.5,0);

\filldraw[fill=red] (0,-1) circle (2pt) node[left] {$(0,p\gamma)$};

  \filldraw[fill=blue] (4.5,0) circle (2pt) node[above] {$P_3$};
  \filldraw[fill=blue] (1,0) circle (2pt) node[above] {$P_2$};
  \filldraw[fill=blue] (0,-1.7) circle (2pt) node[left] {$P_1$};
\end{tikzpicture}
}
\caption{Characterization of dependent and independent Artin-Schreier extensions}\label{ASextensions}
\end{figure}
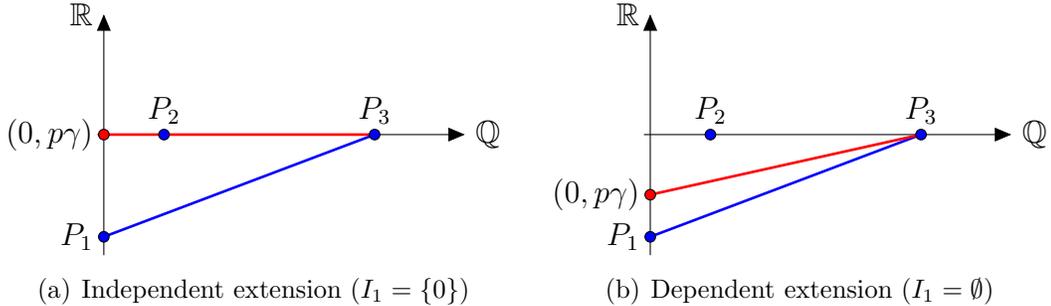

For the Kummer case, we consider the corresponding points that define $\Delta_{x-b}(g)$:
\[
P_1=(0,p\cdot\nu(x-b)),\ P_2=(1,v(p)) \mbox{ and }P_3=(p,0).
\]
In this case, $\gamma\leq \alpha$. One can see that $0\in I_1$ (i.e., $P_2$ lies on $\pi$) if and only if $\gamma=\alpha$.
\begin{figure}[H]
\centering
\subfigure[Independent extension ($I_1=\{0\}$)]{
\begin{tikzpicture}[line cap=round,line join=round,>=triangle 45,x=0.8cm,y=0.8cm]
    \draw[->] (-0.1,0) -- (6,0) node[right] {$\Q$};
    \draw[->] (0,-0.2) -- (0,3.0) node[left] {$\R$};

\filldraw[fill=red] (0,1.9) circle (2pt) node[left] {$(0,p\gamma)$};
    
  \draw [line width=1.pt,color=blue] (0,1)-- (4.5,0);
  \draw [line width=1.pt,color=red] (0,1.9)-- (4.5,0);
  
  \filldraw[fill=blue] (4.5,0) circle (2pt) node[above] {$P_3$};
  \filldraw[fill=blue] (1,1.5) circle (2pt) node[above] {$P_2$};
  \filldraw[fill=blue] (0,1) circle (2pt) node[left] {$P_1$};
\end{tikzpicture}

}
%\quad % espaco separador
\subfigure[Dependent extension ($I_1=\emptyset$)]{
\begin{tikzpicture}[line cap=round,line join=round,>=triangle 45,x=0.8cm,y=0.8cm]
    \draw[->] (-0.1,0) -- (6,0) node[right] {$\Q$};
    \draw[->] (0,-0.2) -- (0,3.0) node[left] {$\R$};

\filldraw[fill=red] (0,1.5) circle (2pt) node[left] {$(0,p\gamma)$};

  \draw [line width=1.pt,color=blue] (0,0.75)-- (4.5,0);
  \draw [line width=1.pt,color=red] (0,1.5)-- (4.5,0);
  
  \filldraw[fill=blue] (4.5,0) circle (2pt) node[above] {$P_3$};
  \filldraw[fill=blue] (1,1.5) circle (2pt) node[above] {$P_2$};
  \filldraw[fill=blue] (0,0.75) circle (2pt) node[left] {$P_1$};
\end{tikzpicture}
}
\caption{Characterization of dependent and independent Kummer extensions}\label{Kummerextensions}
\end{figure}
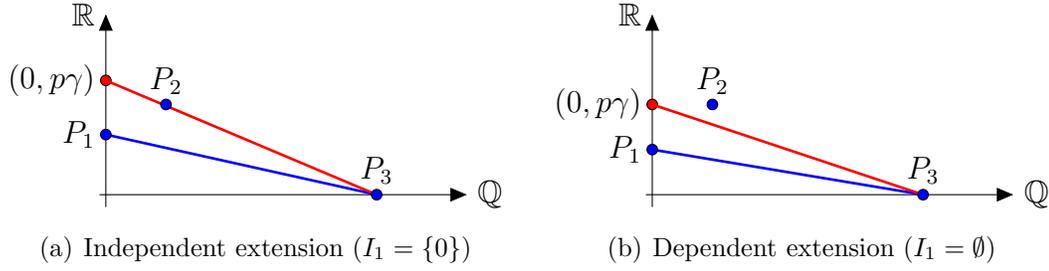

\subsection{The higher rank case}

For both cases, we set $\gamma={\rm dist}(\eta,K)$.

\noindent\textit{Proof of Proposition \ref{propdockuhlma}:} Assume that \eqref{sitAS} is satisfied. As before, for each $b\in K$ we deduce $\nu(x-b)<0$. Hence,
\begin{equation}\label{equationASextsadacer}
\nu_{x-b}(g)=p\cdot \nu(x-b)<\nu(x-b)
\end{equation}
and consequently $\delta_g\leq 0^-<\infty^-$.  By \cite[Proposition 4.2 and Lemma 2.14]{Kuhl}, $(L/K,v)$ is independent if and only if $p\cdot\gamma=\gamma$. In order to conclude the proof of Proposition \ref{propdockuhlma} for this case it is enough to show that $p\cdot\gamma\neq\gamma$ if and only if $B(g)=\emptyset$. 

It follows from \eqref{equationASextsadacer} that $p\cdot\gamma=\delta_g\leq \gamma$. Since the only possibility for $B(g)$ is $\{1\}$ or $\emptyset$, the condition $B(g)=\emptyset$ is equivalent to the existence of $b\in K$ such that
\[
v(\eta-b)=\nu(x-b)>\delta_g=p\cdot \gamma.
\]
This is, by definition, equivalent to $p\cdot\gamma<\gamma$.

Assume that \eqref{sitkummer} is satisfied and again denote by $\alpha=\frac{v(p)}{p-1}\in \Gamma$. By \cite[Proposition 3.7]{KR} we have
\begin{equation}\label{eqiautnkuhl}
\gamma\leq\alpha+H^-
\end{equation}
for some convex subgroup $H$ of $\Gamma$ that does not contain $v(p)$. Let $H$ be the largest convex subgroup of $\Gamma$ with this property.

By \eqref{eqiautnkuhl} for every $b\in K$ and every $i$, $1\leq i<p$, we have $\nu(x-b)< \frac{v(p)}{p-i}$. In particular,
\[
p\cdot\nu(x-b)<v(p)+i\nu(x-b)\mbox{ for every }i, 1\leq i<p.
\]
Hence, $\nu_{x-b}(g)=p\cdot\nu(x-b)$ and consequently $\delta_g=p\cdot\gamma\leq (p\cdot\alpha)^-<\infty^-$. We also conclude that either $B(g)\ni 1$ or $B(g)=\emptyset$.

For simplicity of notation, we will consider a well-ordered family $\{b_\rho\}_{\rho<\lambda}$ in $K$ such that $\gamma_\rho:=\nu(x-b_\rho)$ form a cofinal family in the lower cut set of $\gamma$.

Suppose that $B(g)=\emptyset$. We will show that there exists $\epsilon\in \Gamma$, $\epsilon>H$ such that $\alpha-\nu(x-c)>\epsilon$ for every $c\in K$. This will imply that
\[
\gamma-\alpha\leq (-\epsilon)^-<H^-
\]
and consequently the extension is dependent. We assume (taking $\gamma_\rho$ large enough) that for every $\rho<\lambda$ we have
\[
v(p)+\gamma_\rho>p\cdot\gamma_\sigma\mbox{ for every }\sigma<\lambda.
\]

If there exist $\rho,\sigma$, $\rho<\sigma<\lambda$ such that $\epsilon_0:=\gamma_\sigma-\gamma_\rho>H$, then for every $\theta$, $\sigma<\theta<\lambda$ we have
\[
\gamma_\theta-\epsilon_0=\gamma_\theta-\gamma_\sigma+ \gamma_\rho>\gamma_\rho>p\cdot\gamma_\theta-v(p).
\]
Hence
\[
\alpha-\gamma_\theta>\frac{\epsilon_0}{p-1}.
\]
Since $\epsilon_0>H$ and $H$ is a convex subgroup of $\Gamma$, we deduce that $\epsilon:=\frac{\epsilon_0}{p-1}>H$.

Suppose that for every $\rho,\sigma$, $\rho<\sigma<\lambda$ we have $\gamma_\sigma-\gamma_\rho\not > H$. Since $H$ is convex this implies that $\gamma_\sigma-\gamma_\rho\in H$. Condition \eqref{eqiautnkuhl} implies that $\alpha-\gamma_\rho>H$ for every $\rho<\lambda$. Fix $\rho<\lambda$ and set $\epsilon=\frac{\alpha-\gamma_\rho}{2}$. For every $\sigma$, $\rho<\sigma<\lambda$, we have
\begin{equation}\label{equationdacertodeno}
\alpha-\gamma_\sigma=\frac{\alpha-\gamma_\sigma}{2}+\frac{\alpha-\gamma_\sigma}{2}>\frac{\alpha-\gamma_\sigma}{2}=\epsilon+\frac{\gamma_\rho-\gamma_\sigma}{2}.
\end{equation}
We claim that $\alpha-\gamma_\sigma>\epsilon$. Indeed, if this were not the case, then by \eqref{equationdacertodeno} we would have
\[
0\leq \epsilon-\alpha+\gamma_\sigma<\frac{\gamma_\sigma-\gamma_\rho}{2}.
\]
Since $\frac{\gamma_\sigma-\gamma_\rho}{2}\in H$ (and $H$ is convex) this would imply that $\epsilon-\alpha+\gamma_\sigma\in H$. On the other hand, we have
\[
\epsilon-\alpha+\gamma_\sigma=\frac{\alpha-\gamma_\rho}{2}-\alpha+\gamma_\sigma=\frac{(\gamma_\sigma-\gamma_\rho)}{2}-\frac{(\alpha-\gamma_\sigma)}{2}.
\]
We would obtain that $\alpha-\gamma_\sigma\in H$ and this is a contradiction to \eqref{eqiautnkuhl}.

For the converse, assume that $(L/K,v)$ is dependent. Then there exists $\epsilon>H$ such that
\[
\gamma-\alpha\leq\left(-\frac{\epsilon}{p-1}\right)^-<H^-.
\]
This implies that for every $\rho<\lambda$ we have
\[
(p-1)\cdot\gamma_\rho-v(p)<-\epsilon.
\]
Hence,
\begin{equation}\label{equactonfinal}
v(p)+\gamma_\rho>p\cdot \gamma_\rho+\epsilon.
\end{equation}

Let $\Gamma_1$ be the smallest convex subgroup of $\Gamma$ for which \eqref{eqiautnkuhl} is not satisfied for $H$ replaced by $\Gamma_1$. In particular, $\Gamma_1/H$ has rank one, $\epsilon\in \Gamma_1\setminus H$ and
\[
v(p)-(p-1)\cdot\gamma_\rho\in \Gamma_1\setminus H\mbox{ for }\rho\mbox{ large enough.}
\] 
Taking infimum in $\Gamma_1/H$, we deduce that there exists $\rho<\lambda$ such that for every $\sigma$, $\rho<\sigma<\lambda$ we have
\[
p\cdot(\gamma_\sigma-\gamma_\rho)<\epsilon.
\]
This and \eqref{equactonfinal} imply that
\[
v(p)+\gamma_\rho>p\cdot \gamma_\sigma\mbox{ for every }\sigma, \rho<\sigma<\lambda.
\]
Hence $1\notin B(g)$ and consequently $B(g)=\emptyset$. This concludes the proof of Proposition \ref{propdockuhlma}.

\end{document}